\newtheorem{theorem}{Theorem}[section]
\newtheorem{lemma}[theorem]{Lemma}
\newtheorem*{Acknowledgement}{\textnormal{\textbf{Acknowledgement}}}
\theoremstyle{definition}
\newtheorem{definition}[theorem]{Definition}
\newtheorem{example}[theorem]{Example}
\newtheorem{corollary}[theorem]{Corollary}
\newtheorem{proposition}[theorem]{Proposition}
\newtheorem{remark}[theorem]{Remark}
\numberwithin{equation}{section}
\newcommand{\beqa}{\begin{eqnarray*}}
\newcommand{\eeqa}{\end{eqnarray*}}
\newcommand{\beqn}{\begin{eqnarray}}
\newcommand{\eeqn}{\end{eqnarray}}
\newcommand{\ov}{\overline}
\newcommand{\ci}{\subseteq}
\renewcommand{\a}{\alpha}
\newcommand{\e}{\varepsilon}
\newcommand{\la}{\lambda}
\newcounter{cnt1}
\newcounter{cnt2}
\newcounter{cnt3}
\newcommand{\blr}{\begin{list}{$($\roman{cnt1}$)$}
        {\usecounter{cnt1} \setlength{\topsep}{0pt}
                \setlength{\itemsep}{0pt}}}
\newcommand{\bla}{\begin{list}{$($\alph{cnt2}$)$}
        {\usecounter{cnt2} \setlength{\topsep}{0pt}
                \setlength{\itemsep}{0pt}}}
\newcommand{\bln}{\begin{list}{$($\arabic{cnt3}$)$}
        {\usecounter{cnt3} \setlength{\topsep}{0pt}
                \setlength{\itemsep}{0pt}}}
\newcommand{\el}{\end{list}}
\newtheorem{thm}{Theorem}
\newtheorem{Def}[thm]{Definition}
\newtheorem{rem}[thm]{Remark}
\newcommand{\Rem}{\begin{rem} \rm}
\newcommand{\bdfn}{\begin{Def} \rm}
\newcommand{\edfn}{\end{Def}}
\title{Small Combination of Slices, Dentability and Stability Results Of  Small Diameter  Properties In Banach Spaces}
\author[ S. Basu , S. Seal ]
	{Sudeshna Basu$^{1}$, Susmita Seal$ ^{2}$ }
\address{{$^{1}$}   Sudeshna Basu,
	Department of Mathematics,
	George Washington University,
	Washington DC 20052 USA  and \\
		Department of Mathematics, 
		Ram Krishna Mission Vivekananda Education and Research Institute , 
		Belur Math,  Howrah 711202
		West Bengal, India }
	\email{sudeshnamelody@gmail.com}
\address {{$^{2}$} Susmita Seal, 
		Department of Mathematics, ,
		Ram Krishna Mission Vivekananda Education and Research Institute , 
		Belur Math,  Howrah 711202,
		West Bengal, India}
	\email{susmitaseal1996@gmail.com}
\subjclass{46B20, 46B28}
\keywords{Slices, Huskable, Denting , Dentable , Small Combination of Slices.}
\date{}
\begin{document}
\maketitle
\begin{abstract}
	In this work we study three different versions of small diameter properties of the unit ball in a Banach space and its dual. The related concepts for all closed bounded convex sets of a Banach space was initiated and developed in \cite{B3}, \cite{BR} ,\cite{EW}, \cite{GM} was extensively studied  in the context of dentability, huskability, Radon Nikodym Property and Krein Milman Property  in \cite{GGMS}. We introduce the the Ball Huskable Property ($BHP$), namely, the unit ball has  relatively weakly open subsets of arbitrarily small diameter. We compare this property to two related properties, $BSCSP$ namely, the unit ball has  convex combination of slices of arbitrarily small diameter and $BDP$ namely, the closed unit ball has slices of arbitrarily small diameter.  We  show  $BDP$ implies  $BHP$ which in turn implies  $BSCSP$ and none of the implications can be reversed. We prove similar results for  the $w^*$-versions. We prove that all these properties are stable under $l_p$ sum for $1\leq p \leq \infty, c_0$ sum and Lebesgue Bochner spaces. Finally, we explore the stability of these with properties in the light of three space property. We  show that $BHP$ is a three space property provided $X/Y$ is finite dimensional and same is true for $BSCSP$ when $X$ has $BSCSP$ and $X/Y$ is strongly regular (\cite{GGMS}).
\end{abstract}

\section{Introduction}

Let $X$ be a {\it real} 
nontrivial  Banach space and $X^*$ its dual. We will denote by $B_X$, $S_X$ and $B_X(x, r)$ the closed unit ball, the unit sphere and the closed ball of radius $r >0$ and center $x$. We refer to the monograph \cite{B1} for notions of convexity theory that we will be using here.

\bdfn
\blr
\item We say $A \ci B_{X^*}$ is a norming set for $X$ if $\|x\| =
\sup\{|x^*(x)| : x^* \in A\}$, for all $x \in X$. A closed subspace $F
\ci X^*$ is a norming subspace if $B_F$ is a norming set for $X$.
\item Let $x^* \in X^*$, $\a > 0$ and bounded set $ C \subseteq X$.
Then the set $S(C, x^*, \a) = \{x \in C : x^*(x) > \mbox{sup}~ x^*(C) - \a \}$ is called the  
slice determined by $x^*$ and $\a.$   
For any slice we assume without loss of generality that $\|x^*\| = 1$ , because 
 if $x^*\neq 0$ then $S(C,x^*,\alpha)=S(C,\frac{x^*}{\|x^*\|} ,\frac{\alpha}{\|x^*\|}).$ 
One can analogously define $w^*$ slices in $X^*$ by choosing the functional from the predual.

\item
A point $x$
 in a bounded set $C \ci X$ is called a denting point 
point of $C,$ if for every $\e > 0$, there exists a slice
$S$ of $C$, such that $x \in S$ and $dia(S) <\e.$ One can analogously define $w^*$-denting point of a bounded set in $X^*$ by considering $w^*$-slices.

\item A point $x$
in a bounded convex set $C \ci X$ is called a small combination of slices (SCS)
point of $C$, if for every $\e > 0$, there exists a convex combination of slices ,
$S = \sum_{i=1}^{n} \la_i S_i$ of $C$ (where $ 0 \leq \lambda_i \leq 1 $ and $\sum_{i=1}^{n} \la_i = 1$) such that $x \in S$ and $dia(S) <\e.$ One can analogously define $w^*$-SCS point of a bounded set in $X^*$ by considering $w^*$-slices.
\el
\edfn

 We recall the following two definitions from \cite{BR} and \cite{B2} .

\begin{definition} \label{def bdp} A Banach space $X$ has 
	
	\begin{enumerate}
		\item { \it Ball Dentable Property} ($BDP$) if $B_X$
		has a slice of arbitrarily small diameter. 
		\item {\it Ball Small Combination of Slice Property} ($BSCSP$) if $B_X$
		has a convex combination of slices of arbitrarily small diameter.
	\end{enumerate} 
	
\end{definition}
We now define,

\begin{definition}
	A Banach space $X$ has {\it  Ball Huskable Property}($BHP$) if $B_X$ 
	has a relatively weakly open subset of arbitrarily small diameter.
\end{definition}
\begin{remark}
Analogously we can define $w^{*}$-$BSCSP$, $w^{*}$-$BHP$ and $w^*$-$BDP$ in a dual space by considering $w^*$-SCS, $w^*$ open sets and  $w^*$- slices of $B_{X^*}$ respectively.
\end{remark}
Observe that for a Banach space , $BDP$ always implies $BHP$, in fact, any slice of the unit ball is relatively weakly open. Also $BHP$ implies $BSCSP$,
by Bourgain's Lemma (see\cite{B3}, \cite{GGMS}), which says that every non-empty relatively weakly open subset of $B_X$ contains a finite convex combination of slices . Similar observations are true for $w^*$-versions. Since every $w^*$-slice ($w^*$-open set) of $B_{X^*}$ is also a slice (weakly open set) of $B_{X^*}$, so we have the following diagram :
$$ BDP \Longrightarrow \quad BHP \Longrightarrow \quad  BSCSP$$ $\quad \quad \quad \quad \quad \quad \quad \quad\quad\quad\quad\quad\quad\quad\quad\quad \Big \Uparrow \quad \quad\quad\quad\quad \Big \Uparrow \quad \quad\quad\quad\quad \Big \Uparrow$  $$ w^*BDP \Longrightarrow  w^*BHP \Longrightarrow  w^*BSCSP$$
In general, none of the reverse implications of the diagram hold, which we will discuss later.
 
 SCS points were first introduced in \cite{GGMS} as a “slice generalization” of denting points as well as  the
 notion PC (i.e. points for which the identity mapping on the unit ball, from
 weak topology to norm topology is continuous)and subsequently analyzed in detail in \cite{R} and \cite{S}. It is well known that  $X$ has  Radon Nikodym Property ($RNP$) if and only if every closed bounded and convex subset of $X$ has slices with arbitrarily small diameter.  $X$ has the Point of Continuity Property ($PCP$) if every closed bounded and convex subset of $X$  has relatively weakly open subsets with arbitrarily small diameter. $X$ is said to be Strongly Regular ($SR$) if every closed, convex and bounded subset of $X$ has convex combination of slices with  arbitrarily small diameter. For more details, see \cite{B3}, \cite{GGMS} and \cite{GMS}. It is clear then that $RNP$ implies $PCP$ and $PCP$ implies $SR$. It is also well known that none of these implications can be reversed. Clearly, $RNP$ implies $BDP$, $PCP$ implies $BHP$ and $SR$ implies $BSCSP$. The diagram below gives a clear picture.
 $$ RNP \Longrightarrow \quad PCP \Longrightarrow \quad  SR$$ $\quad \quad \quad \quad \quad \quad \quad \quad\quad\quad\quad\quad\quad\quad\quad\quad \Big \Downarrow \quad \quad\quad\quad\quad \Big \Downarrow \quad \quad\quad\quad\quad \Big \Downarrow$  $$ BDP \Longrightarrow  BHP \Longrightarrow  BSCSP$$
  It was proved in \cite{GGMS} that $X$ is strongly regular if and only if every nonempty bounded convex set $K$ in $X$ is contained in norm closure of $SCS(K)$ i.e. $SCS$ points of $K$. Later it was proved in \cite{S} that a Banach space has $RNP$ if and only if it is SR and has the Krein Milman Property (KMP), i.e. every closed bounded convex subset K of $X$ is the norm-closed convex hull of its extreme points. All the three properties discussed in this paper in a way, are ''localised"(to the closed unit ball) versions of the three geometric properties $RNP, PCP$ and $SR$.

    In this work we introduce Ball Huskable Property ($BHP$) and explore its relation with $BSCSP$ and $BDP$. We observe that $BDP$ implies $BHP$ which in turn implies $BSCSP$ and none of the implications can be reversed. We prove certain stability results for $BDP$ , $BHP$ and  $BSCSP$. We further explore these properties in the context of three space property.The spaces that we will be considering have been well studied in literature. A large class of function spaces like the Bloch spaces, Lorentz and Orlicz spaces, spaces of vector valued functions and spaces of compact operators are examples of the spaces we will be considering, for details, see \cite{HWW} 
    
 \section{Stability Results}
 The following result will  be useful in our discussion.

\begin{proposition} \label{A1}
 A Banach space $X$ has $BDP$ (resp. $BHP$ , $BSCSP$) if and only if $X^{**}$ has $w^*$-$BDP$ (resp. $w^*$-$BHP$, $w^*$-$BSCSP$). 
\end{proposition}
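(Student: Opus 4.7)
The plan is to exploit Goldstine's theorem to set up, for each of the three geometric ingredients (slices, convex combinations of slices, relatively weakly open sets), a natural correspondence between subsets of $B_X$ and subsets of $B_{X^{**}}$ that preserves diameters. The crucial observation is that every slice of $B_X$ and every $w^*$-slice of $B_{X^{**}}$ relevant here is determined by a functional in $X^*$, and the same functional defines both.

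Fix $f_1,\ldots,f_n\in X^*$ and scalars defining a slice, a basic relatively weakly open set, or a convex combination of slices in $B_X$; call this set $W$, and let $\widetilde{W}\subseteq B_{X^{**}}$ be the analogous set built from the same functionals interpreted $w^*$-continuously on $X^{**}$. Then $W=\widetilde{W}\cap B_X\subseteq\widetilde{W}$, so $\mathrm{diam}(W)\le\mathrm{diam}(\widetilde{W})$. For the reverse inequality, pick $x^{**},y^{**}\in\widetilde{W}$ and $\varepsilon>0$, and by Hahn--Banach choose $g\in B_{X^*}$ with $g(x^{**}-y^{**})>\|x^{**}-y^{**}\|-\varepsilon$. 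Apply Goldstine's theorem to the $w^*$-neighborhoods of $x^{**}$ and $y^{**}$ determined jointly by $g$ and the finitely many $f_i$: this produces $x,y\in B_X$ with all of the quantities $|g(x-x^{**})|$, $|g(y-y^{**})|$, $|f_i(x-x^{**})|$, $|f_i(y-y^{**})|$ as small as desired. In the slice and basic-weakly-open cases, $x$ and $y$ then lie in $W$ provided the $f_i$-tolerances are taken within the slack of the strict inequalities defining $\widetilde{W}$ at $x^{**}$ and $y^{**}$; in the convex-combination case one performs the approximation componentwise on the slice-pieces expressing $x^{**}$ and $y^{**}$, with $\lambda_i$-weighted tolerances, and then averages. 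It follows that $\|x-y\|\ge g(x-y)>\|x^{**}-y^{**}\|-3\varepsilon$, and letting $\varepsilon\to 0$ yields $\mathrm{diam}(\widetilde{W})\le\mathrm{diam}(W)$.

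The proposition then follows in both directions. If $X$ has $BDP$ (resp.\ $BHP$, $BSCSP$), take an arbitrarily small $W$ of the required type in $B_X$; the extension $\widetilde{W}$ is a $w^*$-counterpart in $B_{X^{**}}$ of the same diameter. For the converse, start with a small $w^*$-type set in $B_{X^{**}}$---in the $BHP$ case, first pass to any nonempty basic $w^*$-open subset, which is legitimate since such basic sets form a neighborhood base---and then restrict to $B_X$ using the same defining functionals: the resulting set in $B_X$ is nonempty by Goldstine density and has no larger diameter. The main delicate point is the Goldstine plus Hahn--Banach diameter-matching step in the second paragraph; this runs uniformly for all three properties, so a single argument handles the entire statement.
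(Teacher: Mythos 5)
Your argument is correct and follows essentially the same route as the paper: both rest on Goldstine's theorem showing that the set in $B_X$ is $w^*$-dense in its counterpart in $B_{X^{**}}$ defined by the same functionals, and then match diameters via $w^*$-lower semicontinuity of the norm (which your Hahn--Banach-plus-approximation step reproves by hand). Your write-up is somewhat more uniform across the three properties and more explicit about the converse direction, but the substance is identical.
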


\begin{proof}
Suppose $X$ has  $BDP.$
  Let $\varepsilon >0.$ Then there exists a slice $S(B_X, x^*, \alpha)$ of $B_X$ with diameter less than $\frac{\varepsilon}{2}.$\\
Claim:~ $S(B_X, x^*, \alpha)$ is $w^*$ dense in the $w^*$-slice $S(B_{X^{**}}, x^*, \alpha)$ of $B_{X^{**}}.$\\
Indeed, fix $x^{**}$ $\in S(B_{X^{**}}, x^*, \alpha)$. By Goldstine's Theorem  , there is a net $(x_\beta)$ in $B_X$ which converges to $x^{**}$ in the $w^*$-topology. Since, 
$$\displaystyle{\lim_{\beta}}\quad x^*(x_\beta) =  x^{**} (x^*) > 1-  \alpha$$   
 So, there exists $\beta_0$ such that $(x_\beta) \in S(B_X, x^*, \alpha)$ for all $\beta \geqslant \beta_0$. Hence the claim.
 
 Now let $x^{**}$ , $\tilde{x}^{**}$ $\in S(B_{X^{**}}, x^*, \alpha)$. Then there exist net $(x_\beta)$ and $(\tilde{x}_\beta)$ in $S(B_X, x^*, \alpha)$ such that $(x_\beta - \tilde{x}_\beta)$ converges to $x^{**} -\tilde{x}^{**}$ in the  $w^*$ -topology. So, 
$$\Vert x^{**} -\tilde{x}^{**} \Vert\leqslant \displaystyle{\liminf_{\beta}}\quad \Vert x_\beta - \tilde{x}_\beta \Vert\leqslant \frac{\varepsilon}{2} < \varepsilon.$$ 
Thus dia $S(B_{X^{**}}, x^*, \alpha) < \varepsilon$ . Hence $X^{**}$ has $w^*$-BDP.

 Conversely, if $X^{**}$ has $w^*$-$BDP$, it immediately follows that $X$ has $BDP.$
 
 The proofs for $BHP$ and $BSCSP$ follow similarly.
\end{proof}

We immediately have, 
\begin{corollary}
If X has $BDP$ (resp. $BHP$ , $BSCSP$) then $X^{**}$ has  $BDP$ (resp. $BHP$, $BSCSP$) 
\end{corollary}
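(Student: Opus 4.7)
The plan is essentially to chain Proposition \ref{A1} with the vertical implications already recorded in the diagram immediately preceding it. Applying the proposition in the forward direction: if $X$ has $BDP$ (resp. $BHP$, $BSCSP$), then $X^{**}$ has $w^*$-$BDP$ (resp. $w^*$-$BHP$, $w^*$-$BSCSP$). So the entire task reduces to showing that, for any dual Banach space $Z = Y^*$, the property $w^*$-$BDP$ implies $BDP$, and similarly for $BHP$ and $BSCSP$. Applying this with $Z = X^{**}$ then gives the corollary.

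For the second step I would appeal to the fact that on a dual space the $w^*$-topology is coarser than the weak topology. Concretely, a $w^*$-slice of $B_{X^{**}}$ is a slice determined by a functional $x^* \in X \subseteq X^{***}$, and any such slice is in particular a norm slice (i.e. a slice in the usual sense) of $B_{X^{**}}$, with the same diameter. Hence $w^*$-$BDP$ of $X^{**}$ gives norm slices of $B_{X^{**}}$ of arbitrarily small diameter, which is exactly $BDP$ of $X^{**}$. The same observation handles $BHP$: every $w^*$-open subset of $B_{X^{**}}$ is weakly open (since the weak topology on $X^{**}$ is finer than the $w^*$-topology), so a small-diameter $w^*$-open subset is also a small-diameter weakly open subset. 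For $BSCSP$ the argument is identical, applied to each of the finitely many slices making up the convex combination.

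Combining these two steps gives the statement of the corollary for all three properties simultaneously. There is no real obstacle here: the corollary is immediate once one observes that the vertical implications $w^*\text{-}BDP \Rightarrow BDP$, $w^*\text{-}BHP \Rightarrow BHP$, $w^*\text{-}BSCSP \Rightarrow BSCSP$ (which are displayed in the diagram) hold in any dual space, and then feeds Proposition \ref{A1} into them. The only thing worth being careful about is keeping track of the identification of $X$ as a subspace of $X^{***}$ so that a functional yielding a $w^*$-slice is recognized as a legitimate norm-continuous functional giving an ordinary slice.
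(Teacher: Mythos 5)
Your argument is exactly the paper's intended one: the corollary follows by combining Proposition \ref{A1} with the vertical implications $w^*$-$BDP \Rightarrow BDP$, $w^*$-$BHP \Rightarrow BHP$, $w^*$-$BSCSP \Rightarrow BSCSP$ in the dual space $X^{**}$, which is why the paper offers no separate proof. One small slip: the functional defining a $w^*$-slice of $B_{X^{**}}$ lives in $X^*$ viewed inside $X^{***}$ (since the $w^*$-topology on $X^{**}$ is $\sigma(X^{**},X^*)$), not in $X \subseteq X^{***}$ as you wrote; this does not affect the argument.
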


 The following lemma will be useful.
 
\begin{lemma} \label{p sum lem}
 Let $Z=X\oplus_pY$ , $1\leqslant p < \infty$ , For any $\varepsilon > 0$ and  for any slice there exists a slice $S(B_Z, z^*, \mu)$ of $B_Z$ such that $ S(B_Z, z^*, \mu) \subset S(B_X, x^*, \alpha) \times \varepsilon B_Y.$  Similarly, there exists a slice $S(B_Z, z^*, \gamma)$ of $B_Z$ such that $ S(B_Z, z^*, \gamma) \subset  \varepsilon B_X \times S(B_Y, y^*, \alpha).$
\end{lemma} 

\begin{proof}
Let $S(B_X, x^*, \alpha)$  be any slice of $B_X.$  Also let $\varepsilon > 0.$
 Put $z^* = (x^*,0) \in S_{Z^*}$ . Choose $0< \mu < \alpha$ such that $[1-(1-\mu)^p]^{1/p} < \varepsilon.$  Now, consider a slice of $B_Z$ as , $S(B_Z,z^*,\mu) = \{ z\in B_Z: z^*(z) > 1-\mu \} = \{ z\in B_Z: x^*(x) > 1-\mu \}.$  Then $ S(B_Z,z^*,\mu) \subset S(B_X,x^*,\alpha) \times \varepsilon B_Y$. Indeed, let $z\in S(B_Z,z^*,\mu).$  Then,
 $$1\geqslant \Vert z \Vert^p = \Vert x \Vert^p + \Vert y \Vert^p > (1- \mu)^p + \Vert y \Vert^p$$  Thus, $ \Vert y \Vert^p < 1- (1- \mu)^p$ and so $ \Vert y \Vert < [1- (1- \mu)^p]^{1/p} < \varepsilon$. Also since, $0< \mu < \alpha$, it follows that, $x\in S(B_X,x^*,\alpha)$ . Hence, $z=(x,y)\in S(B_X, x^*, \alpha) \times \varepsilon B_Y.$ The other proof is similar.
\end{proof}

\begin{proposition} \label{BDP p sum}
 Let $X$ and $Y$ be two Banach spaces and $Z=X\oplus_p Y$ , $1\leqslant p < \infty.$  Then $Z$ has $BDP$ if and only if $X$ or $Y$ has $BDP.$
\end{proposition} 

\begin{proof}
Suppose $Z$ has $BDP.$  We prove by contradiction. If possible, let $X$ and $Y$ do not have $BDP.$  Then there exists $\varepsilon > 0$ such that every slice of $B_X$ and $B_Y$ has diameter greater than $\varepsilon.$  Since $Z$ has $BDP$, there exists a slice $S(B_Z, z^* ,\alpha)$ of $B_Z$ with diameter less than $\varepsilon.$\\
Case~1 : $x^*=0$ or $y^*=0$ \\
Without loss of generality, let $y^*= 0.$ Then $x^*\in S_{X^*}.$ Then $S(B_X, x^*, \alpha) \times \{0\}\ \subset S(B_Z, z^*,\alpha)$. 
Thus, 
$$dia S(B_X,x^*,\alpha) = dia(S(B_X,x^*,\alpha) \times \{0\}) \leqslant dia S(B_Z,z^*,\alpha) < \varepsilon, $$
a contradiction.\\

Case-2 : $x^* \neq 0$ and $y^* \neq 0.$ 
Choose $z_0=(x_0,y_0) \in S(B_Z,z^*,\frac{\alpha}{4})$ with $\Vert z_0 \Vert = 1$. Now, $S(B_X,\frac{x^*}{\Vert x^* \Vert},\frac{\alpha}{2})$ and $S(B_Y,\frac{y^*}{\Vert y^* \Vert},\frac{\alpha}{2})$ are slices of $B_X$ and $B_Y$ respectively. Hence, $dia S(B_X,\frac{x^*}{\Vert x^* \Vert},\frac{\alpha}{2}) >\varepsilon$ and $dia S(B_Y,\frac{y^*}{\Vert y^* \Vert},\frac{\alpha}{2})>\varepsilon$. There exists $x,\tilde{x} \in S(B_X,\frac{x^*}{\Vert x^* \Vert},\frac{\alpha}{2})$ and $y,\tilde{y} \in S(B_Y,\frac{y^*}{\Vert y^* \Vert},\frac{\alpha}{2})$  such that $\Vert x - \tilde{x} \Vert > \varepsilon$ and   $\Vert y - \tilde{y} \Vert > \varepsilon.$  \\
Let  $z=(\Vert x_0\Vert x , \Vert y_0\Vert y)$ and $\tilde{z}=(\Vert x_0\Vert \tilde{x} , \Vert y_0\Vert \tilde{y}).$ Clearly, $ z, \tilde{z} \in S(B_Z,z^*,\alpha).$ 
 Also, $\Vert z - \tilde{z} \Vert ^p  = \Vert x_0 \Vert ^p \Vert x - \tilde{x} \Vert^p + \Vert y_0 \Vert ^p \Vert y - \tilde{y} \Vert^p  > \varepsilon^p (\Vert x_0 \Vert ^p + \Vert y_0 \Vert ^p)  = \varepsilon ^p$ which implies $\Vert z - \tilde{z} \Vert > \varepsilon,$ a contradiction.\\
 Hence either $X$ or $Y$ has $BDP.$

Conversely, assume that either $X$ or $Y$ has $BDP$. Without loss of generality let $X$ have $BDP.$ Let $\varepsilon > 0.$ Then there exists a slice $S(B_X,x^*,\alpha)$ of $B_X$ with diameter $<\varepsilon,$ where $x^* \in S_{X^*}$ and $\alpha>0.$ From Lemma $\ref{p sum lem},$ there exists a slice $S(B_Z,z^*,\mu)$ of $B_Z$ such that $ S(B_Z,z^*,\mu) \subset S(B_X,x^*,\alpha) \times \varepsilon B_Y$
Consequently,$$ dia (B_Z,z^*,\mu)\leqslant dia S(B_X,x^*,\alpha) + dia  (\varepsilon B_Y)< \varepsilon + 2\varepsilon = 3\varepsilon$$
\end{proof}

\begin{corollary}
Let $X=\oplus_p X_i.$ If $X_i$ has $BDP$ for some i , then $X$ has $BDP.$ 

\end{corollary}
We quote the following Lemma from \cite{L1}.

\begin{lemma} \cite{L1}\label{BDP lem inf}
 Let $Z= X\oplus_{\infty} Y$ then for every slice $S(B_Z,z^*,\alpha)$ of $B_Z$ there exists a slice $S(B_X,x^*,\mu_1)$ of $B_X$ , a slice $S(B_Y,y^*,\mu_2)$ of $B_Y,$  $x_0 \in B_X$ and  $y_0 \in B_Y$ such that $S(B_X,x^*,\mu_1) \times \{y_0\} \subset S(B_Z,z^*,\alpha)$ and $\{x_0\} \times S(B_Y,y^*,\mu_2)  \subset S(B_Z,z^*,\alpha).$ 
\end{lemma}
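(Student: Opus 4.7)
The plan is to exploit the duality $Z^{*}=X^{*}\oplus_{1}Y^{*}$ dual to the $\ell_{\infty}$-sum on $Z$: any norm-one $z^{*}\in S_{Z^{*}}$ decomposes uniquely as $z^{*}=(\phi,\psi)$ with $\|\phi\|+\|\psi\|=1$, and $z^{*}(x,y)=\phi(x)+\psi(y)$. Since $B_{Z}=B_{X}\times B_{Y}$, the slice condition reduces to $\phi(x)+\psi(y)>1-\alpha$. My strategy is to fix a single $y_{0}\in B_{Y}$ that almost attains $\|\psi\|$ and then take a slice of $B_{X}$ thin enough that the resulting horizontal slab $S(B_{X},x^{*},\mu_{1})\times\{y_{0}\}$ lies inside $S(B_{Z},z^{*},\alpha)$; the analogous vertical slab will follow by the symmetric construction.

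First I would dispose of the degenerate case $\phi=0$: then $\|\psi\|=1$, so it suffices to pick $y_{0}\in B_{Y}$ with $\psi(y_{0})>1-\alpha$, and the inclusion holds for \emph{any} norm-one $x^{*}\in X^{*}$ and any $\mu_{1}>0$.

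In the main case $\phi\neq 0$, I would set $x^{*}=\phi/\|\phi\|\in S_{X^{*}}$, choose $y_{0}\in B_{Y}$ with $\psi(y_{0})>\|\psi\|-\alpha/2$ (possible because $\|\psi\|=\sup_{B_{Y}}\psi$), and pick $\mu_{1}>0$ small enough that $\|\phi\|\mu_{1}<\alpha/2$. For $x\in S(B_{X},x^{*},\mu_{1})$ one has $\phi(x)>\|\phi\|(1-\mu_{1})$, whence
\[
z^{*}(x,y_{0}) > \|\phi\|+\|\psi\|-\|\phi\|\mu_{1}-\alpha/2 = 1-\|\phi\|\mu_{1}-\alpha/2 > 1-\alpha,
\]
and since $(x,y_{0})\in B_{X}\times B_{Y}=B_{Z}$, this yields $S(B_{X},x^{*},\mu_{1})\times\{y_{0}\}\subset S(B_{Z},z^{*},\alpha)$. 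Swapping the roles of $X$ and $Y$ produces $x_{0}\in B_{X}$, a functional $y^{*}\in S_{Y^{*}}$, and $\mu_{2}>0$ giving $\{x_{0}\}\times S(B_{Y},y^{*},\mu_{2})\subset S(B_{Z},z^{*},\alpha)$.

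The argument is essentially routine; the only mild subtlety is the case split on whether $\phi$ or $\psi$ vanishes, which I do not expect to pose any serious obstacle beyond careful bookkeeping of the parameters.
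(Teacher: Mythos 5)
Your argument is correct and complete; the paper itself gives no proof of this lemma (it is quoted from \cite{L1}), and your approach via the duality $Z^*=X^*\oplus_1 Y^*$, choosing $y_0$ to nearly attain $\|\psi\|$ and shrinking $\mu_1$ so that $\|\phi\|\mu_1<\alpha/2$, is exactly the standard argument used in that source. The case split on $\phi=0$ (resp.\ $\psi=0$) is handled correctly, so nothing further is needed.
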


\begin{proposition} \label{max sum}
  $Z= X\oplus_{\infty} Y$ has $BDP$ if and only if both $X$ and $Y$ have $BDP.$
\end{proposition}

\begin{proof}
First suppose that $Z$ has $BDP.$ Let $0< \varepsilon <2.$ Then there exists a slice $S(B_Z,z^*,\alpha)$ of $B_Z,$ such that $ dia (S(B_Z,z^*,\alpha) <\varepsilon,$ where $z^*=(x^*,y^*) \in S_{Z^*}$ and $\alpha>0.$\\
Claim :$x^* \neq 0$ and $y^* \neq0$. \\
If not, let $x^* =0.$ Then $\Vert y^* \Vert = 1.$ Choose any fixed $y_0 \in S(B_Y, y^*, \alpha).$ Then $B_X \times \{y_0\} \subset S(B_Z, z^*, \alpha).$
 So, $ 2 = dia (B_X \times \{y_0\})\leqslant dia S(B_Z, z^*, \alpha) <\varepsilon,$ a contradiction. Hence the claim.
 Now from Lemma $\ref{BDP lem inf},$ there exists a slice $S(B_X,x^*,\mu)$ of $B_X$ and $y_1 \in B_Y$ such that $S(B_X,x^*,\mu) \times \{y_1\} \subset S(B_Z,z^*,\alpha).$ 
 Consequently, $dia  S(B_X,x^*,\mu) = dia  (S(B_X,x^*,\mu)\times \{y_1\} )\leqslant diaS (B_Z, z^*, \alpha) < \varepsilon.$ 
 Thus, $X$ has $BDP.$ Similarly, $Y$ has $BDP$.\\
 Conversely let both $X$ and $Y$ have $BDP$ and $\varepsilon>0$.  So, there exist slices $S(B_X,x^*,\alpha_1)$ and $S(B_Y,y^*,\alpha_2)$ of $B_X$ and $B_Y$ respectively such that $ dia  S(B_X,x^*,\alpha_1)<\varepsilon$ and $ dia  S(B_Y,y^*,\alpha_2)<\varepsilon.$ Choose $0<\gamma<\min\{\alpha_1,\alpha_2\}.$ Consider slice $S(B_Z,z^*,\gamma)$ of $B_Z$ such that $z^*=(\frac{x^*}{2},\frac{y^*}{2})$. Then $\Vert z^*\Vert =1.$ Then $S(B_Z,z^*,\gamma)\subset S(B_X,x^*,\alpha_1) \oplus_{\infty} S(B_Y,y^*,\alpha_2). $ Indeed, let $z=(x,y)\in S(B_Z,z^*,\gamma).$ Then $$z^*(z)= \frac{x^*}{2}(x)+\frac{y^*}{2}(y)>1-\gamma$$
 $$\Rightarrow 1+y^*(y)\geqslant x^*(x)+y^*(y)>2-2\gamma$$
 $$\Rightarrow y^*(y)>1-2\gamma>1-\alpha_2$$
 Thus $y\in S(B_Y,y^*,\alpha_2)$ and similarly $x\in S(B_X,x^*,\alpha_1).$ Finally, $dia  S(B_Z,z^*,\gamma)\leqslant \varepsilon$ as both $S(B_X,x^*,\alpha_1)$ and $S(B_Y,y^*,\alpha_2)$ are of diameter $<\varepsilon.$  
\end{proof}

\begin{proposition}
 Let $X$ and $Y$ be two Banach spaces and $Z=X\oplus_pY$ , $1\leqslant p < \infty$ . Then $Z$ has $BHP$ if and only if $X$ or $Y$ has $BHP$.
\end{proposition}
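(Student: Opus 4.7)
The plan is to mirror Proposition \ref{BDP p sum}, replacing slices with basic relatively weakly open subsets throughout.

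For the converse direction, I would adapt Lemma \ref{p sum lem}: assume without loss that $X$ has $BHP$ and pick a basic relatively weakly open set $U = \{x \in B_X : |x_i^*(x) - \beta_i| < \gamma_i,\ i=1,\ldots,n\}$ of $B_X$ with $\mathrm{diam}\, U < \varepsilon$. Choose $x_0 \in U$ of norm one (available since the intersection of the kernels of the $x_i^*$ is infinite-dimensional, so $x_0$ can be translated along that subspace to $S_X$ without leaving $U$), and let $x^* \in S_{X^*}$ satisfy $x^*(x_0) = 1$ via Hahn--Banach. Setting $z^* = (x^*, 0) \in S_{Z^*}$ and picking $\mu > 0$ with $(1-(1-\mu)^p)^{1/p} < \varepsilon$, the set
\[
W = \{(x, y) \in B_Z : |x_i^*(x) - \beta_i| < \gamma_i,\ z^*(x, y) > 1 - \mu\}
\]
is a non-empty relatively weakly open subset of $B_Z$ containing $(x_0, 0)$. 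Exactly as in Lemma \ref{p sum lem} the slice condition forces $\|y\| < \varepsilon$, while the remaining conditions confine $x$ to $U$. Hence $\mathrm{diam}\, W \le \mathrm{diam}\, U + 2\varepsilon < 3\varepsilon$, which can be made arbitrarily small.

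For the forward direction, I would argue by contradiction. Suppose neither $X$ nor $Y$ has $BHP$, so there is $\varepsilon > 0$ such that every non-empty relatively weakly open subset of $B_X$ (resp.\ $B_Y$) has diameter at least $\varepsilon$. Pick a basic relatively weakly open $W = \{z \in B_Z : |z_i^*(z) - \beta_i| < \gamma_i,\ i=1,\ldots,n\}$ of $B_Z$ with $\mathrm{diam}\, W < \varepsilon$ and $z_0 = (x_0, y_0) \in W \cap S_Z$ (using the same sphere-intersection argument as above). If $x_0 = 0$ or $y_0 = 0$, say $y_0 = 0$, then $U_X = \{x \in B_X : |x_i^*(x) - \beta_i| < \gamma_i\}$ is a non-empty relatively weakly open set in $B_X$ satisfying $(x,0) \in W$ for every $x \in U_X$, forcing $\mathrm{diam}\, U_X \le \mathrm{diam}\, W < \varepsilon$, which contradicts our assumption. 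Otherwise, writing $z_i^* = (x_i^*, y_i^*)$ and $\delta_i = \tfrac{1}{2}(\gamma_i - |z_i^*(z_0) - \beta_i|) > 0$, define
\[
U_X = \{x \in B_X : |x_i^*(\|x_0\| x - x_0)| < \delta_i\}, \quad U_Y = \{y \in B_Y : |y_i^*(\|y_0\| y - y_0)| < \delta_i\},
\]
which are non-empty relatively weakly open subsets of $B_X, B_Y$ (they contain $x_0/\|x_0\|$ and $y_0/\|y_0\|$). A triangle inequality confirms that $(\|x_0\|x, \|y_0\|y) \in W$ for all $x \in U_X, y \in U_Y$. Choosing $x, \tilde x \in U_X$ and $y, \tilde y \in U_Y$ with $\|x - \tilde x\|,\ \|y - \tilde y\| > \mathrm{diam}\, W$ (possible since both diameters are at least $\varepsilon > \mathrm{diam}\, W$), the identity
\[
\|z - \tilde z\|^p = \|x_0\|^p \|x - \tilde x\|^p + \|y_0\|^p \|y - \tilde y\|^p
\]
combined with $\|x_0\|^p + \|y_0\|^p = 1$ gives two points in $W$ farther apart than $\mathrm{diam}\, W$, the desired contradiction.

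The hard part is the forward direction, where there is no off-the-shelf slice-lifting statement analogous to Lemma \ref{BDP lem inf} to exploit. The essential new move is the re-anchoring of the defining functionals of $W$ at the normalized factor points $x_0/\|x_0\|$ and $y_0/\|y_0\|$; this is what produces genuine non-empty relatively weakly open sets on each factor whose diameter lower bounds can be transported through the $\ell_p$ scaling back into $W$.
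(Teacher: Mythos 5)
Your proof is correct. The forward (``only if'') direction is essentially the paper's own argument: the paper likewise fixes $z_0=(x_0,y_0)\in W\cap S_Z$, splits into the case $x_0=0$ or $y_0=0$ versus both nonzero, and in the latter case re-anchors the defining functionals at $x_0/\Vert x_0\Vert$ and $y_0/\Vert y_0\Vert$ (after first normalizing the basic set to the form $\vert z_i^*(z-z_0)\vert<1$ and splitting the slack as $\frac{1}{2\Vert x_0\Vert}$, $\frac{1}{2\Vert y_0\Vert}$) before rescaling by $\Vert x_0\Vert,\Vert y_0\Vert$ and using $\Vert x_0\Vert^p+\Vert y_0\Vert^p=1$; so the move you describe as the essential new step is exactly the technique the paper imports from Langemets, and your $\delta_i$-bookkeeping is just a reparametrization of it. Where you genuinely diverge is the converse: the paper first replaces the small weakly open set of $B_X$ by a finite intersection of slices $\bigcap_{i=1}^n S(B_X,x_i^*,\alpha_i)$ contained in it (using that slices form a subbase for the relative weak topology), lifts each slice separately via Lemma~\ref{p sum lem}, and intersects the lifted slices, whereas you keep $U$ as a basic weakly open set, extend its defining functionals by zero to $Z$, and append a single slice condition $z^*(x,y)>1-\mu$ with $x^*$ norming a point of $U\cap S_X$. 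Your route buys two things: it bypasses the subbase-of-slices fact, and it makes nonemptiness of the resulting subset of $B_Z$ immediate (it contains $(x_0,0)$), while the paper's intersection $\bigcap_{i=1}^n S(B_Z,z_i^*,\mu_i)$ needs a further word to see it is nonempty, since the $\mu_i$ are strictly smaller than the $\alpha_i$. The one hypothesis you should make explicit is that your translation of a point of $U$ into $S_X$ along $\bigcap_i\ker x_i^*$ uses that $X$ is infinite dimensional, which is the paper's standing assumption.
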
 

\begin{proof}
Suppose $Z$ has $BHP$. If possible let $X$ and $Y$ fail $BHP.$ Then there exists $\varepsilon > 0$ such that every relatively weakly open subset of $B_X$ and $B_Y$ has diameter greater than  $\varepsilon.$  Now since $Z$ has $BHP$ so there exists a relatively weakly open subset $W$ of $B_Z$ with diameter less than $\varepsilon.$  Fix $z_0=(x_0,y_0) \in W \bigcap S_Z$ . Then there exists a basic weakly open subset,
$W_0=\{z\in B_Z : \vert z_i^*(z-z_0)\vert <1 , i=1,2,...n\} \subset W$ where $z_i^* =(x_i^*,y_i^*)$, i = 1,2,...n. We consider two cases.\\
Case~1 : $x_0=0$ or $y_0=0$ \\
without loss of generality let $y_0= 0.$ Thus $x_0 \in S_X.$  Then \\
$U=\{ x\in B_X : \vert x_i^*(x-x_0)\vert <1 ; i=1,2,...n\}$ is nonempty relatively weakly open subset of $B_X$ . By our assumption $dia (U)$ $>\varepsilon$ . So there exists $x,\tilde{x} \in U$ such that $\Vert x-\tilde{x} \Vert > \varepsilon.$  Now, $z=(x,0)$ and $\tilde{z}=(\tilde{x},0)$ are in $W_0$  and $\Vert z- \tilde{z} \Vert = \Vert x- \tilde{x} \Vert > \varepsilon$, a contradiction.\\
Case~2 : $x_0 \neq 0$ and $y_0 \neq 0$ \\
Consider, $U=\{x \in B_X : \vert x_i^*(x-\frac{x_0}{\Vert x_0\Vert}) \vert < \frac{1}{2\Vert x_0\Vert} ; i=1,2,..n\}$ \\and 
$V=\{y \in B_Y : \vert y_i^*(y-\frac{y_0}{\Vert y_0\Vert}) \vert < \frac{1}{2\Vert y_0\Vert} ; i=1,2,..n\}$ \\
Then $U$ and $V$ are nonempty relatively weakly open subsets of $B_X$ and $B_Y$ respectively and so $dia (U)>$ $\varepsilon$ and $dia (V)>$ $\varepsilon$ . Hence, there exists $x,\tilde{x} \in U$ and $y,\tilde{y} \in V$ such that $\Vert x- \tilde{x} \Vert > \varepsilon$ and $\Vert y- \tilde{y} \Vert > \varepsilon.$ \\
Thus $z=(\Vert x_0\Vert x , \Vert y_0\Vert y)$ and $\tilde{z}=(\Vert x_0\Vert \tilde{x} , \Vert y_0\Vert \tilde{y})$ are in $W.$ \\
Indeed, $\Vert z \Vert^p = \Vert x_0 \Vert^p \Vert x \Vert^p + \Vert y_0 \Vert^p \Vert y \Vert^p \leqslant \Vert x_0 \Vert^p + \Vert y_0 \Vert^p = 1$,  and \\
$\forall i = 1,2,....n$ we have, \\
$\vert z_i^*(z-z_0)\vert = \vert x_i^*(\Vert x_0\Vert x - x_0) + y_i^*(\Vert y_0\Vert y - y_0)\vert$ $\leqslant \Vert x_0 \Vert \vert x_i^*(x-\frac{x_0}{\Vert x_0\Vert}) \vert +  \Vert y_0 \Vert \vert y_i^*(y-\frac{y_0}{\Vert y_0\Vert}) \vert$ 
$$\quad \quad \quad \quad \quad \quad \quad \quad \quad \quad \quad \quad\quad \quad  < \Vert x_0 \Vert \frac{1}{2\Vert x_0\Vert} + \Vert y_0 \Vert \frac{1}{2\Vert y_0\Vert} =1$$ 
Similarly for $\tilde{z}.$  \\
Finally,
 $\Vert z - \tilde{z} \Vert ^p $ = $\Vert x_0 \Vert ^p \Vert x - \tilde{x} \Vert^p + \Vert y_0 \Vert ^p \Vert y - \tilde{y} \Vert^p $ $> \varepsilon^p (\Vert x_0 \Vert ^p + \Vert y_0 \Vert ^p) $ = $\varepsilon ^p$ \\ and so $\Vert z - \tilde{z} \Vert > \varepsilon$, a contradiction.\\
 Hence either $X$ or $Y$ has $BHP.$ \\
Conversely assume that either $X$ or $Y$ has $BHP.$  Without loss of generality, suppose  $X$ has $BHP.$  Let $\varepsilon > 0.$ Then there exists a relatively weakly open set $W$ of $B_X$ with diameter $<\varepsilon.$  Since slices of $B_X$ forms a subbase for relatively weakly open subset of $B_X$, so there exists slices $S(B_X,x_i^*,\alpha_i)$, i= 1,2,...n of $B_X$ such that $\bigcap _{i=1}^{n} S(B_X,x_i^*,\alpha_i) \subset W.$   Now from Lemma $\ref{p sum lem}$ for each i , we get a slice $S(B_Z,z_i^*,\mu_i)$ of $B_Z$ such that  $S(B_Z,z_i^*,\mu_i) \subset S(B_X,x_i^*,\alpha_i) \times \varepsilon B_Y$.  Choose $\mu < \min\{\mu_1,\mu_2,...\mu_n\}.$ \\
Thus , $\bigcap _{i=1}^{n} S(B_Z,z_i^*,\mu) \subset \bigcap_{i=1}^{n} S(B_X,x_i^*,\alpha_i) \times \varepsilon B_Y \subset W \times \varepsilon B_Y.$\\
Hence $T = \bigcap _{i=1}^{n} S(B_Z,z_i^*,\mu)$ is a relatively weakly open subset of $B_Z$  with diameter less than $3\varepsilon.$  Consequently $Z$ has $BHP$. 

\end{proof}

\begin{corollary}
Let $X=\oplus_p X_i$ . If $X_i$ has $BHP$ for some i , then $X$ has $BHP$.

\end{corollary}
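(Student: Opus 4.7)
The plan is to reduce to the two-summand case already established in the preceding proposition. Fix an index $i_0$ for which $X_{i_0}$ has $BHP$, and set
\[
Y \;:=\; \oplus_p \{X_i : i \neq i_0\},
\]
the $\ell_p$ sum of the remaining coordinates. Since $Y$ is itself a Banach space, splitting off the $i_0$-th coordinate gives the standard isometric identification $X \cong X_{i_0} \oplus_p Y$, putting $X$ in the binary-sum form handled previously.

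With this identification in hand, the preceding proposition applies directly to $X_{i_0} \oplus_p Y$: because $X_{i_0}$ has $BHP$ by hypothesis, the two-summand space $X_{i_0} \oplus_p Y$ has $BHP$, and hence $X$ has $BHP$. Thus the proof is essentially a one-line invocation once the splitting has been set up.

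I do not foresee any real obstacle. All of the genuine work, namely producing a small-diameter relatively weakly open subset of the two-summand ball out of one in $B_{X_{i_0}}$ by intersecting pullbacks of slices via Lemma~\ref{p sum lem}, was carried out in the proof of the preceding proposition; this corollary only packages that result for arbitrary index sets via the splitting above. The only point that might warrant a passing comment is that the family $\{X_i\}$ is allowed to be arbitrary (finite, countable, or uncountable), but the isometric identification $X \cong X_{i_0} \oplus_p Y$ is valid in all such cases with the same one-line justification.
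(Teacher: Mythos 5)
Your proof is correct and is precisely the argument the paper intends: the corollary is stated without proof immediately after the two-summand proposition, and the standard isometric splitting $X \cong X_{i_0} \oplus_p Y$ with $Y = \oplus_p\{X_i : i \neq i_0\}$ (valid for $1 \leq p < \infty$, the range covered by that proposition) reduces it to that case. Nothing further is needed.
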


The following Lemma from \cite{ALN} will be useful.
\begin{lemma} \label {BHP inf sum} \cite{ALN}
Let $X$ and $Y$ be Banach spaces and $W$ be a nonempty relatively weakly open subset in unit ball of $Z= X \oplus _{\infty} Y.$  Then $U$ and $V$ can be chosen to be relatively weakly opn subsets of $B_X$ and $B_Y$ respectively such that $U \times V \subset W.$
\end{lemma}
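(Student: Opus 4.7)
The plan is to exploit the special structure of the $\ell_\infty$-sum, namely that $B_Z = B_X \times B_Y$ and that any continuous linear functional on $Z = X \oplus_\infty Y$ decomposes as $z^* = (x^*, y^*)$ acting by $z^*(x,y) = x^*(x) + y^*(y)$. Because the constraint $\|z\|_\infty \leq 1$ factors into the separate constraints $\|x\| \leq 1$ and $\|y\| \leq 1$, there is no interaction between the two components beyond what happens inside the individual functionals $z_i^*$, and one can split every subbasic condition additively.

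Carrying this out, I would first pick $z_0 = (x_0, y_0) \in W$. Since $W$ is relatively weakly open in $B_Z$, there exist $z_1^*, \ldots, z_n^* \in Z^*$ and $\varepsilon > 0$ such that the basic neighborhood
$$W_0 = \{ z \in B_Z : |z_i^*(z - z_0)| < \varepsilon,\ i = 1, \ldots, n \}$$
is contained in $W$. Writing $z_i^* = (x_i^*, y_i^*)$, I would then define
$$U = \{ x \in B_X : |x_i^*(x - x_0)| < \varepsilon/2,\ i = 1, \ldots, n \},$$
$$V = \{ y \in B_Y : |y_i^*(y - y_0)| < \varepsilon/2,\ i = 1, \ldots, n \}.$$
Each is a (subbasic) relatively weakly open subset of the respective unit ball, and each is nonempty since $x_0 \in U$ and $y_0 \in V$.

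For the containment, take any $(x,y) \in U \times V$. Since $\|x\| \leq 1$ and $\|y\| \leq 1$, the pair $(x,y)$ lies in $B_Z$. The triangle inequality applied to $z_i^*((x,y) - z_0) = x_i^*(x-x_0) + y_i^*(y-y_0)$ immediately gives $|z_i^*((x,y)-z_0)| < \varepsilon$ for each $i$, so $(x,y) \in W_0 \subseteq W$, as required.

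I do not anticipate a serious obstacle: the whole argument hinges on two elementary facts that hold only in the $\ell_\infty$-case, namely that $B_Z$ is literally the product $B_X \times B_Y$ and that $Z^*$ splits as an $\ell_1$-sum so each $z_i^*$ splits additively. The only point that requires minor care is remembering to halve the threshold $\varepsilon$ when defining $U$ and $V$, so that the two coordinate contributions add up to less than $\varepsilon$ rather than each being allowed to reach $\varepsilon$.
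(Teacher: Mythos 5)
Your proof is correct, and it is exactly the standard argument: the paper itself states this lemma without proof, citing \cite{ALN}, and your decomposition $B_Z = B_X \times B_Y$, $z_i^* = (x_i^*, y_i^*)$ acting additively, with the threshold halved between the two coordinates, is precisely how the cited result is obtained. No gaps.
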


\begin{proposition}\label{l-infinity bhp}
 $Z= X\oplus_{\infty} Y$ has $BHP$ if and only if both $X$ and $Y$ have $BHP$ .
\end{proposition}

\begin{proof}
First suppose that $Z$ has $BHP$ . Also let $\varepsilon >0$. Then $B_Z$ has a relatively weakly open subset $W_0$ with diameter less than $<\varepsilon.$ Then by Lemma $\ref{BHP inf sum},$ there exists a relatively weakly open subset $U_0$ in $B_X$ and $V_0$ in $B_Y$ such that $U_0 \times V_0 \subset W_0.$  Fix $u_0\in U_0$ and $v_0\in V_0$ . Then,\\
dia $(U_0)$ = dia $(U_0 \times \{v_0\})\leqslant$ dia ($U_0 \times V_0) \leqslant$ dia $W_0 < \varepsilon.$ \\
Similarly for $V_0.$  Consequently, both $X$ and$Y$ have $BHP$ . \\
Conversely suppose  $X$ and $Y$ have $BHP$ and $\varepsilon>0$. So, there exists relatively weakly open subset $U$ and $V$ of $B_X$ and $B_Y$ respectively such that $dia (U)<\varepsilon$ and $dia (V)<\varepsilon$. Since slices of $B_X$ forms a subbase for relatively weakly open subset of $B_X$ , so there exists slices $S(B_X,x_i^*,\alpha_i)$ , i= 1,2,$\ldots$, n of $B_X$ such that $\bigcap _{i=1}^{n} S(B_X,x_i^*,\alpha_i) \subset U$ and similarly  there exists slices $S(B_Y,y_i^*,\beta_j)$ , j= 1,2,$\ldots$, m of $B_Y$ such that $\bigcap _{j=1}^{m} S(B_Y,y_i^*,\beta_j) \subset V.$ Without loss of generality let $n\geqslant m$. Then proceeding same way as in Proposition $\ref{max sum}$ we get slices of $B_Z$, $S(B_Z,z_i^*,\gamma_i)\quad \forall i=1,\ldots,n$ \\where
 $\gamma_i<\min\{\alpha_i,\beta_i\}\quad if\quad  i=1,\ldots,m$ and $\gamma_i<\min\{\alpha_i,\beta_m\}\quad if\quad i=m+1,\ldots,n$ such that 
$$S(B_Z,z_i^*,\gamma_i)\subset S(B_X,x_i^*,\alpha_i) \oplus_{\infty} S(B_Y,y_i^*,\beta_i) \quad\forall i=1,\ldots,m$$ and
 $$S(B_Z,z_i^*,\gamma_i)\subset S(B_X,x_i^*,\alpha_i) \oplus_{\infty} S(B_Y,y_i^*,\beta_i) \quad\forall i=m+1,\ldots,n$$
 Thus , $\bigcap_{i=1}^{n} S(B_Z,z_i^*,\gamma_i)$ $\subset$
  $ \bigcap_{i=1}^{n} S(B_X,x_i^*,\alpha_i) \oplus_{\infty} \bigcap_{i=1}^{m} S(B_Y,y_i^*,\beta_i)$ 
  $\subset$ 
   $ U\oplus_{\infty} V .$\\
    Hence, $dia (\bigcap_{i=1}^{n} S(B_Z,z_i^*,\gamma_i))<\varepsilon$
\end{proof}

\begin{proposition}
 Let $X$ and $Y$ be two Banach spaces and $Z=X\oplus_pY$ , $1\leqslant p < \infty.$ $Z$ has $BSCSP$ if $X$ or $Y$ has $BSCSP.$
\end{proposition}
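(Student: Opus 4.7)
The plan is to mimic the strategy already used for $BDP$ and $BHP$, exploiting Lemma \ref{p sum lem} to transfer small-diameter slices from the factor to the $\ell_p$-sum, and then checking that the transfer is compatible with taking convex combinations. Without loss of generality I will assume $X$ has $BSCSP$.

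First, given $\varepsilon>0$, use $BSCSP$ of $X$ to pick slices $S(B_X,x_1^*,\alpha_1),\ldots,S(B_X,x_n^*,\alpha_n)$ and scalars $\lambda_i\geqslant 0$ with $\sum_{i=1}^n\lambda_i=1$ such that the convex combination $C=\sum_{i=1}^n\lambda_i S(B_X,x_i^*,\alpha_i)$ has $\operatorname{diam}(C)<\varepsilon$. Then, for each $i$, apply Lemma \ref{p sum lem} with the same $\varepsilon$ to obtain a slice $S(B_Z,z_i^*,\mu_i)$ of $B_Z$ (with $z_i^*=(x_i^*,0)$) satisfying
\[
S(B_Z,z_i^*,\mu_i)\subset S(B_X,x_i^*,\alpha_i)\times\varepsilon B_Y.
\]

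Next I would observe that convex combinations commute with Cartesian products in the sense that for convex coefficients $\lambda_i$,
\[
\sum_{i=1}^n\lambda_i\bigl(S(B_X,x_i^*,\alpha_i)\times\varepsilon B_Y\bigr)=\Bigl(\sum_{i=1}^n\lambda_i S(B_X,x_i^*,\alpha_i)\Bigr)\times\Bigl(\sum_{i=1}^n\lambda_i\varepsilon B_Y\Bigr)=C\times\varepsilon B_Y,
\]
using that $\varepsilon B_Y$ is convex and $\sum\lambda_i=1$. Hence the convex combination of slices $T=\sum_{i=1}^n\lambda_i S(B_Z,z_i^*,\mu_i)$ of $B_Z$ satisfies $T\subset C\times\varepsilon B_Y$.

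Finally, I would estimate the diameter of $T$ in the $\ell_p$-norm: if $z=(x,y)$ and $\tilde z=(\tilde x,\tilde y)$ belong to $C\times\varepsilon B_Y$, then $\|x-\tilde x\|<\varepsilon$ and $\|y-\tilde y\|\leqslant 2\varepsilon$, so
\[
\|z-\tilde z\|=\bigl(\|x-\tilde x\|^p+\|y-\tilde y\|^p\bigr)^{1/p}<\bigl(\varepsilon^p+(2\varepsilon)^p\bigr)^{1/p}=(1+2^p)^{1/p}\varepsilon.
\]
Since $(1+2^p)^{1/p}$ is an absolute constant and $\varepsilon$ was arbitrary, $B_Z$ admits convex combinations of slices of arbitrarily small diameter, so $Z$ has $BSCSP$. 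The only mildly delicate point is verifying the product-decomposition of the convex combination of product sets; everything else is a routine application of Lemma \ref{p sum lem} and the $\ell_p$-triangle inequality, so I do not anticipate a serious obstacle.
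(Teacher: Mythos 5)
Your proposal is correct and follows essentially the same route as the paper: apply Lemma \ref{p sum lem} to each slice in a small convex combination from $B_X$ and observe that the resulting convex combination of slices of $B_Z$ sits inside (a convex combination of) $S(B_X,x_i^*,\alpha_i)\times\varepsilon B_Y$, hence has diameter a fixed multiple of $\varepsilon$. Your explicit product-decomposition step and the constant $(1+2^p)^{1/p}$ are slightly more careful than the paper's bound of $3\varepsilon$, but the argument is the same.
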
 

\begin{proof}
Without loss of generality, let $X$ have $BSCSP.$  Let $\varepsilon > 0.$ Then there exists a convex combination of slices $\sum_{i=1}^{n} \lambda_i S(B_X, x_i^*, \alpha_i)$ , $\lambda_i \geqslant 0 , \sum_{i=1}^{n} \lambda_i =1 $ of $B_X$ with diameter less than $\varepsilon.$ By Lemma $\ref{p sum lem},$ for each i, there exists a  slice $S(B_Z,z_i^*,\mu_i)$ of $B_Z$ such that  $S(B_Z,z_i^*,\mu_i) \subset S(B_X,x_i^*,\alpha_i) \times \varepsilon B_Y.$  
Thus, $\sum _{i=1}^{n} \lambda_i S(B_Z,z_i^*,\mu_{i}) \subset \sum_{i=1}^{n} \lambda_i \Big[ S(B_X,x_i^*,\alpha_i) \times \varepsilon B_Y \Big].$ 
Hence, $dia(\sum _{i=1}^{n} \lambda_i S(B_Z,z_i^*,\mu_{i})) < 3 \epsilon.$ Consequently $Z$ has $BSCSP.$ 

\end{proof}

\begin{corollary}
Let $X=\oplus_p X_i.$  If $X_i$ has $BSCSP$ for some $i,$ then $X$ has $BSCSP.$ 
\end{corollary}

\begin{proposition}
	If $Z= X\oplus_1 Y$ has $BSCSP,$ then either $X$ or $Y$ has $BSCSP.$ 
\end{proposition}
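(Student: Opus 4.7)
The plan is to argue by contradiction. Suppose neither $X$ nor $Y$ has $BSCSP$; then there exists $\varepsilon > 0$ with the property that every convex combination of slices of $B_X$, and every convex combination of slices of $B_Y$, has diameter at least $\varepsilon$. Using the hypothesis on $Z$, I pick a convex combination $C = \sum_{i=1}^n \lambda_i S(B_Z, z_i^*, \alpha_i)$ of slices of $B_Z$ with $\mathrm{diam}(C) < \varepsilon/4$; the contradiction will come from exhibiting two elements of $C$ at distance greater than $\varepsilon/4$.

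Since $Z^* = X^*\oplus_\infty Y^*$, each $z_i^* = (x_i^*, y_i^*) \in S_{Z^*}$ satisfies $\max(\|x_i^*\|, \|y_i^*\|) = 1$. I split the indices into $I = \{i : \|x_i^*\| = 1\}$ and $J = \{1,\dots,n\}\setminus I$ (so $\|y_i^*\| = 1$ and $\|x_i^*\| < 1$ on $J$). Set $\lambda = \sum_{i\in I}\lambda_i$; after interchanging $X$ and $Y$ if necessary, I may assume $\lambda \geq 1/2$. This dichotomy is where the argument is specific to $p = 1$: for $p > 1$ the dual norm on $Z^*$ is $\ell_{p'}$ with $p' < \infty$, so both $\|x_i^*\|$ and $\|y_i^*\|$ can be strictly less than $1$, and the clean split breaks down.

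The key geometric observation is that the isometric embedding $x\mapsto(x,0)$ of $B_X$ into $B_Z$ carries $S(B_X, x_i^*, \alpha_i)$ into $S(B_Z, z_i^*, \alpha_i)$ for each $i \in I$, since $z_i^*(x,0) = x_i^*(x) > 1-\alpha_i$. Symmetrically, for each $i \in J$ I fix once and for all a point $y_i \in B_Y$ with $y_i^*(y_i) > 1 - \alpha_i$, so that $(0, y_i) \in S(B_Z, z_i^*, \alpha_i)$, and set $w = \sum_{i\in J} \lambda_i y_i$. The rescaled sum $D = \sum_{i \in I} (\lambda_i/\lambda)\, S(B_X, x_i^*, \alpha_i)$ is then an honest convex combination of slices of $B_X$, hence has diameter at least $\varepsilon$, so I can find $u = \sum_{i\in I}(\lambda_i/\lambda)u_i$ and $\tilde u = \sum_{i\in I}(\lambda_i/\lambda)\tilde u_i$ in $D$ with $\|u-\tilde u\|_X > \varepsilon/2$.

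Finally, the elements $z = \sum_{i\in I}\lambda_i (u_i, 0) + \sum_{i\in J} \lambda_i (0, y_i) = (\lambda u, w)$ and $\tilde z = (\lambda \tilde u, w)$ both belong to $C$ by construction, and the $\ell_1$ norm on $Z$ yields
$$\|z-\tilde z\|_Z = \|\lambda(u-\tilde u)\|_X + \|0\|_Y = \lambda \|u - \tilde u\|_X > \tfrac{1}{2}\cdot\tfrac{\varepsilon}{2} = \tfrac{\varepsilon}{4},$$
contradicting $\mathrm{diam}(C) < \varepsilon/4$. The main subtlety is that the $J$-piece $w$ must be taken identical for $z$ and $\tilde z$, so that the $\ell_1$ distance in $Z$ collapses cleanly to $\lambda$ times the $X$-distance; the factor $\lambda \geq 1/2$ is precisely what forces the choice of tolerance $\varepsilon/4$ on $\mathrm{diam}(C)$.
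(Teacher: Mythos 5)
Your proof is correct and follows essentially the same route as the paper's: split the indices according to which component of $z_i^*=(x_i^*,y_i^*)$ attains norm one, and push slices of $B_X$ (resp.\ $B_Y$) into the corresponding slices of $B_Z$ via $x\mapsto(x,0)$ (resp.\ $y\mapsto(0,y)$). The only real difference is bookkeeping: the paper varies both the $I$- and $J$-blocks simultaneously (splitting into two cases according to whether $\lambda_I$ or $\lambda_J$ vanishes) and so separates two points of the combination by $\varepsilon$ against a diameter bound of $\varepsilon$, while you anchor the $J$-block at a fixed point $w$ and invoke $\lambda_I\geq 1/2$ after a possible swap, which avoids the case analysis at the cost of the weaker constant $\varepsilon/4$.
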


\begin{proof}
	If possible, let  $X$ and  $Y$ fail $BSCSP.$ Then there exists $\varepsilon > 0$ such that every convex combination of slices of $B_X$ and $B_Y$ have diameter greater than  $\varepsilon.$ Now since $Z$ has $BSCSP,$ so there exists convex combination of slices $\sum_{i=1}^{n} \lambda_i S(B_Z,z_i^*,\alpha_i)$  of $B_Z$ with diameter  less than $\varepsilon.$
	Observe,$ 1 = \Vert z_i^* \Vert = \max \{\Vert x_i^* \Vert , \Vert y_i^* \Vert \} ,\quad  i= 1,2,......n.$ 
	We consider two disjoint subsets $I$ and $J$ where,
	$ I = \{ i : \Vert x_i^* \Vert = 1 \}\quad and \quad  J = \{ j  : \Vert y_j^* \Vert = 1 \}$
	Now, $ S(B_X,x_i^*,\alpha_i) \times \{0\} \subset S(B_Z,z_i^*,\alpha_i) \quad \forall i \in I$
	and $ \{0\} \times S(B_Y,y_j^*,\alpha_j)  \subset S(B_Z,z_j^*,\alpha_j) \quad \forall j \in J.$\\
	Let $\quad \lambda_I =\sum_{i\in I} \lambda_i \quad and \quad \lambda_J =\sum_{j\in J} \lambda_j $
	
	Case-1 : $ \lambda_I = 0 $ or $ \lambda_J = 0 $\\
	Without loss of generality, let $ \lambda_I = 0 $\\ 
	Then $ \lambda_J = 1 $ and so $\sum_{j\in J} \lambda_j S(B_Y,y_j^*,\alpha_j)$ is a convex combination of slices of $B_Y,$ hence 
	dia ($\sum_{j\in J} \lambda_j S(B_Y,y_j^*,\alpha_j)) > \varepsilon $
	So, there exist $y, \tilde{y} \in \sum_{j\in J} \lambda_j S(B_Y,y_j^*,\alpha_j)$ such that $\Vert y - \tilde{y} \Vert > \varepsilon.$
	Hence, $\Vert (0,y) - (0,\tilde{y}) \Vert > \varepsilon$, which is a contradiction.
	
	Case-2 : $ \lambda_I \neq 0 $ or $ \lambda_J \neq 0 $\\
	So we have, $ \sum_{i\in I} \frac{\lambda_i}{\lambda_I} S(B_X, x_i^* , \alpha_i) \times \{0\} \subset  \sum_{i\in I} \frac{\lambda_i}{\lambda_I} S(B_Z, z_i^* , \alpha_i)$ and $\{0\} \times \sum_{j\in J} \frac{\lambda_j}{\lambda_J} S(B_Y, y_j^* , \alpha_j)   \subset  \sum_{j\in J} \frac{\lambda_j}{\lambda_J} S(B_Z, z_j^* , \alpha_j).$
	Again, $dia (\sum_{i\in I} \frac{\lambda_i}{\lambda_I} S(B_X, x_i^* , \alpha_i)) > \varepsilon $ and $dia (\sum_{j\in J} \frac{\lambda_j}{\lambda_J} S(B_Y, y_j^* , \alpha_j)) > \varepsilon.$
	So, there exist \ $x,\tilde{x} \in \sum_{i\in I} \frac{\lambda_i}{\lambda_I} S(B_X, x_i^* , \alpha_i)$ and \ $y,\tilde{y} \in \sum_{j\in J} \frac{\lambda_j}{\lambda_J} S(B_Y, y_j^* , \alpha_j)$ such that $ \Vert x-\tilde{x} \Vert > \varepsilon$ and $\Vert y-\tilde{y} \Vert > \varepsilon$

	Observe,  $$ (\lambda_I x, \lambda_J y) = (\lambda_I x, 0) +(0, \lambda_J y) \in \sum_{i\in I} \lambda_i S(B_X, x_i^* , \alpha_i)\times \{0\} + \{0\} \times \sum_{j\in J} \lambda_j S(B_Y, y_j^* , \alpha_j)$$ 
	$$\hspace{3.1 cm} \subset \sum_{i\in I} \lambda_i S(B_Z, z_i^* , \alpha_i)+\sum_{j\in J} \lambda_j S(B_Z, z_j^* , \alpha_j)$$  $$= \sum_{i=1}^{n} \lambda_i S(B_Z, z_i^* , \alpha_i) $$
	Similarly, $(\lambda_I \tilde{x} , \lambda_J \tilde{y} ) \in \sum_{i=1}^{n} \lambda_i S(B_Z, z_i^* , \alpha_i) $\\
	Also, $\Vert (\lambda_I x, \lambda_J y) - (\lambda_I \tilde{x} , \lambda_J \tilde{y} ) \Vert = \Vert (\lambda_I (x-\tilde{x}) , \lambda_J (y-\tilde{y} ) \Vert = \lambda_I \Vert x-\tilde{x} \Vert + \lambda_J \Vert y-\tilde{y} \Vert > \varepsilon (\lambda_I + \lambda_J) = \varepsilon, $ a contradiction .
	Hence, either $X$ or $Y$ has $BSCSP.$
	
\end{proof}

\begin{proposition}\label{l-infinity bscsp}
 If $Z= X\oplus_{\infty} Y$ has $BSCSP$ , then both $X$ and $Y$ have $BSCSP.$ 
\end{proposition}

\begin{proof}
Suppose  $Z$ has $BSCSP.$ So, for any $\varepsilon> 0,$ there exists a convex combination of slices $\sum_{i=1}^{n} \lambda_i S(B_Z, z_i^*, \alpha_{i}), \lambda_i \geqslant 0 , \sum_{i=1}^{n} \lambda_i =1 $  of $B_Z$ such that $ dia (\sum_{i=1}^{n} \lambda_i S(B_Z, z_i^*,\alpha_i) <\varepsilon.$ By Lemma $\ref{BDP lem inf},$ for each i , there exits a slice $S(B_X, x_i^*, \mu_i)$ of $B_X$ and $y_i \in B_Y$ such that  $S(B_X ,x_i^*, \mu_i) \times \{y_i\} \subset S(B_Z, z_i^*, \alpha_i).$
Let $y_0 = \sum_{i=1}^{n} \lambda_i y_i.$
$\Big[\sum_{i=1}^{n} \lambda_i S(B_X, x_i^*,\mu_i)\Big] \times \{y_0\} = \sum_{i=1}^{n} \lambda_i \Big[ S(B_X, x_i^*, \mu_i) \times \{y_i\} \Big] \subset \sum_{i=1}^{n} \lambda_i S(B_Z, z_i^*, \alpha_i).$ 
Hence,  $dia(\sum_{i=1}^{n} \lambda_i S(B_X, x_i^*, \mu_i)) = dia(\Big[\sum_{i=1}^{n} \lambda_i S(B_X, x_i^*, \mu_i)\Big] \times \{y_0\}$) $\leqslant dia (\sum_{i=1}^{n} \lambda_i S(B_Z, z_i^*, \alpha_i) \leq  \varepsilon.$
So, $X$ has $BSCSP.$ Similarly for $Y.$
\end{proof}

Similar results are true for $w^*$-versions. We omit the proofs which are similar. 



\begin{proposition} \label{w star}
Let $X$ and $Y$ be Banach spaces and $Z=X\oplus_p Y$ , $1< p \leqslant \infty.$  Then 
\begin{enumerate}
\item $Z^*$ has $w^*BHP$ ($w^*BDP$) if and only if $X^*$ or $Y^*$ has $w^*BHP$ ($w^*BDP$).
\item  $X^*$ or $Y^*$ has $w^*BSCSP$ implies $Z^*$ has $w^*BSCSP.$ 
\end{enumerate}

\end{proposition}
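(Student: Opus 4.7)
The plan is to prove a $w^*$-analogue of Lemma \ref{p sum lem} and then mimic the arguments of Proposition \ref{BDP p sum} and of the analogous $p$-sum propositions for $BHP$ and $BSCSP$, everywhere replacing slices by $w^*$-slices and relatively weakly open sets by relatively $w^*$-open sets. I would identify $Z^* = X^* \oplus_p Y^*$ as the dual of $Z = X \oplus_q Y$, where $1/p + 1/q = 1$; the canonical $w^*$-topology on $Z^*$ is then the product of the individual $w^*$-topologies, and the functionals defining $w^*$-slices of $B_{Z^*}$ are precisely pairs $(x,y) \in Z$.

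The $w^*$-version of Lemma \ref{p sum lem} I would prove states that for any $w^*$-slice $S(B_{X^*}, x, \alpha)$ of $B_{X^*}$ (with $x \in S_X$) and any $\varepsilon > 0$, there is a $w^*$-slice $S(B_{Z^*}, (x,0), \mu)$ of $B_{Z^*}$ with $S(B_{Z^*}, (x,0), \mu) \subset S(B_{X^*}, x, \alpha) \times \varepsilon B_{Y^*}$. The proof is literally that of Lemma \ref{p sum lem}: choose $0 < \mu < \min\{\alpha,\, 1 - (1-\varepsilon^p)^{1/p}\}$ and use the identity $\|z^*\|^p = \|x^*\|^p + \|y^*\|^p$ on $Z^*$. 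Crucially, $(x,0)$ lies in the predual $Z$, so the slice is genuinely $w^*$-open. With this lemma in hand, the ``if'' halves of (1) and the implication in (2) follow by repeating verbatim the forward directions of the earlier propositions: start with a small $w^*$-slice (respectively, a small basic $w^*$-open subset, written as a finite intersection of $w^*$-slices, or a small convex combination of $w^*$-slices) in $B_{X^*}$ or $B_{Y^*}$, apply the lemma to each component, and conclude with the same $3\varepsilon$ diameter estimate via the triangle inequality.

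For the ``only if'' halves of (1), I would adapt the two-case arguments from Proposition \ref{BDP p sum} and its $BHP$ counterpart. A $w^*$-slice of $B_{Z^*}$ is determined by some $z = (x,y) \in Z$; if one coordinate vanishes, Case~1 immediately produces a small $w^*$-slice in the other factor; otherwise Case~2 selects a nearly-norming element in the original $w^*$-slice, forms the normalized $w^*$-slices of $B_{X^*}$ and $B_{Y^*}$ at $x/\|x\|$ and $y/\|y\|$, and recombines points from them to contradict the small-diameter assumption on the ambient $w^*$-slice. The $w^*BHP$ case is parallel, with a basic $w^*$-open neighbourhood $W_0 = \{z^* \in B_{Z^*} : |(x_i, y_i)(z^* - z_0^*)| < 1,\; i = 1,\ldots,n\}$ splitting coordinate-wise since each defining functional $(x_i, y_i)$ lies in $X \oplus_q Y$. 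The main obstacle is purely notational --- ensuring at every step that defining functionals are kept in the predual $Z$ rather than in $Z^{**}$ --- but the analytic content is identical to the norm proofs. No converse is claimed for $w^*BSCSP$ in (2), consistent with the norm setting, where the converse is only available for $\oplus_1$ and would require a separate treatment.
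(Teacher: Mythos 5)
Your proposal is correct and follows exactly the route the paper intends: the authors omit the proof of Proposition~\ref{w star}, saying only that the arguments are similar to the norm-topology versions, and your adaptation (realizing $Z^*$ as the dual of $X\oplus_q Y$ so that the defining functionals of $w^*$-slices and basic $w^*$-open sets live in the predual, proving the $w^*$-analogue of Lemma~\ref{p sum lem}, and transplanting the two-case arguments of Proposition~\ref{BDP p sum} and its $BHP$/$BSCSP$ counterparts) is precisely that adaptation. The key point you correctly isolate --- that the norm identity $\Vert z^*\Vert^p=\Vert x^*\Vert^p+\Vert y^*\Vert^p$ and the coordinatewise splitting of functionals carry over verbatim --- is all that is needed.
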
 

\begin{proposition}
Let $X$ and $Y$ be Banach spaces and $Z=X\oplus_1 Y$,   Then\\
$Z^*$ has $w^*BHP$ ( resp.  $w^*BDP$ , $w^*BSCSP$ ) implies both $X^*$ and $Y^*$ has $w^*BHP$ ( resp. $w^*BDP$ , $w^*BSCSP$).
\end{proposition}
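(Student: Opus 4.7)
The proposition is the $w^*$-dual of the three non-$w^*$ statements above for $Z = X \oplus_\infty Y$. Since $Z^* = X^* \oplus_\infty Y^*$ is the dual of $X \oplus_1 Y$, $w^*$-slices of $B_{Z^*}$ are parametrized by elements $z = (x, y) \in X \oplus_1 Y$ via $\langle z, (x^*, y^*) \rangle = x^*(x) + y^*(y)$, and the $w^*$-topology on $B_{Z^*} = B_{X^*} \times B_{Y^*}$ is generated by evaluations against $X$ and $Y$ separately. The plan is therefore to re-run the three non-$w^*$ proofs verbatim, replacing slices by $w^*$-slices and relatively weakly open sets by relatively $w^*$-open sets.

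To do this I first transfer Lemmas \ref{BDP lem inf} and \ref{BHP inf sum} to the $w^*$-setting. The $w^*$-analogue of Lemma \ref{BDP lem inf} asserts that for every $w^*$-slice $S(B_{Z^*}, z, \alpha)$ of $B_{Z^*}$, with $z = (x, y) \in X \oplus_1 Y$ of norm $1$, there exist a $w^*$-slice $S(B_{X^*}, x, \mu_1)$ and a point $y_0^* \in B_{Y^*}$ with $S(B_{X^*}, x, \mu_1) \times \{y_0^*\} \subset S(B_{Z^*}, z, \alpha)$, and symmetrically for $Y^*$. The proof follows the non-dual case in \cite{L1}: when $x \neq 0$, choose $y_0^* \in B_{Y^*}$ nearly attaining $\|y\|$ and take $\mu_1$ small enough that $x^* \in S(B_{X^*}, x/\|x\|, \mu_1)$ forces $x^*(x) + y_0^*(y) > 1 - \alpha$; the $x = 0$ case is direct. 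The $w^*$-version of Lemma \ref{BHP inf sum} says that any nonempty relatively $w^*$-open subset $W \subset B_{Z^*}$ contains a rectangle $U \times V$ where $U, V$ are relatively $w^*$-open in $B_{X^*}, B_{Y^*}$; this is immediate because any basic $w^*$-neighbourhood defined by $|x_i(x^* - x_0^*) + y_i(y^* - y_0^*)| < 1$ with $(x_i, y_i) \in X \oplus_1 Y$ can be refined to the product conditions $|x_i(x^* - x_0^*)| < 1/2$ and $|y_i(y^* - y_0^*)| < 1/2$.

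With these lemmas in place, the three cases reduce to transcriptions of the earlier propositions. For $w^*BDP$, assume a $w^*$-slice of $B_{Z^*}$ has diameter less than $\varepsilon < 2$; if one coordinate of $z$ vanishes, the full unit ball of the other dual factor embeds in the slice, giving diameter $\geq 2$, so both coordinates are nonzero and the $w^*$-analogue of Lemma \ref{BDP lem inf} produces small-diameter $w^*$-slices of $B_{X^*}$ and $B_{Y^*}$. For $w^*BHP$, the rectangle lemma applied to a small-diameter $W$ yields $U \times V \subset W$; fixing any point of $V$ shows $\mathrm{diam}(U) \leq \mathrm{diam}(W) < \varepsilon$, and symmetrically for $V$. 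For $w^*BSCSP$, given a small-diameter convex combination $\sum_i \lambda_i S(B_{Z^*}, z_i, \alpha_i)$, apply the $w^*$-analogue of Lemma \ref{BDP lem inf} to each slice to get $S(B_{X^*}, x_i, \mu_i) \times \{y_i^*\} \subset S(B_{Z^*}, z_i, \alpha_i)$, set $y_0^* = \sum_i \lambda_i y_i^* \in B_{Y^*}$, and verify that $\bigl[\sum_i \lambda_i S(B_{X^*}, x_i, \mu_i)\bigr] \times \{y_0^*\} \subset \sum_i \lambda_i S(B_{Z^*}, z_i, \alpha_i)$, whence the projected convex combination in $B_{X^*}$ has the desired small diameter; symmetrically for $Y^*$.

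The main obstacle is not any new geometric idea but careful bookkeeping: one must track that slices, convex combinations, and relatively open sets in $B_{Z^*}$ defined via the predual $X \oplus_1 Y$ respect the product structure $B_{Z^*} = B_{X^*} \times B_{Y^*}$. Once the $w^*$-versions of Lemmas \ref{BDP lem inf} and \ref{BHP inf sum} are in hand, each of the three implications is a line-by-line translation of the non-dual proofs already written.
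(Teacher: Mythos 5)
Your proof is correct and takes essentially the same approach as the paper: the paper in fact omits this proof entirely, remarking only that it is ``similar'' to the non-$w^*$ propositions for $X\oplus_\infty Y$, and your argument supplies exactly that similarity by transferring Lemmas \ref{BDP lem inf} and \ref{BHP inf sum} to the $w^*$-setting (legitimate since $\sigma(Z^*,Z)$ restricted to $B_{Z^*}=B_{X^*}\times B_{Y^*}$ is the product of the two $w^*$-topologies and $w^*$-slices come from norm-one elements of $X\oplus_1 Y$) and then transcribing the three non-dual proofs. No gaps.
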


\begin{proposition}
	Let  $X = \oplus_{c_0(\Gamma)}X_i$. If  $X$  has $BDP$ (resp. $BHP$ , $BSCSP$), then each of $X_i$ has $BDP$ (resp. $BHP$ , $BSCSP$) .
\end{proposition}

\begin{proof}
	Fix i. Note that $ X = \oplus_{c_0(\Gamma)}X_j$ = $X_i\oplus_{\infty} \Big(\oplus_{c_0(\Gamma \setminus \{i\})} X_j\Big).$
	The rest follows from Propositions \ref{max sum}, \ref{l-infinity bhp} and \ref {l-infinity bscsp}.
	 \end{proof}
\begin{remark}
	The converse of the above Proposition may not be true.
	 Indeed, $\mathbb{R}$ has $BDP$ (resp. $BHP$ , $BSCSP$) but $c_0$ does not have $BDP$ (resp. $BHP$ , $BSCSP$).
\end{remark}

We now show that none of the implications  in the following  diagram can be reversed.

$$ BDP \Longrightarrow \quad BHP \Longrightarrow \quad  BSCSP$$ $\quad \quad \quad \quad \quad \quad \quad \quad\quad\quad\quad\quad\quad\quad\quad\quad \Big \Uparrow \quad \quad\quad\quad\quad \Big \Uparrow \quad \quad\quad\quad\quad \Big \Uparrow$  $$ w^*BDP \Longrightarrow  w^*BHP \Longrightarrow  w^*BSCSP$$


\begin{example} \label{example1}
	\blr
\item $ BHP \nRightarrow BDP.$\\ It was proved in \cite{BGLPRZ1}  that, if a Banach space $X$ contains an isomorphic copy of $c_0$ then it can be equivalently renormed, so that every slice of unit ball of $X$ has diameter two but it has a relatively weakly open subset of arbitrarily small diameter.
Hence a Banach space containing an isomorphic copy of $c_0$ can be equivalently renormed so that it has $BHP$ but not $BDP.$ 
\item $ BSCSP \nRightarrow BHP.$\\
It was proved in \cite{BGLPRZ2} that , if a Banach space $X$ contains an isomorphic copy of $c_0$ then it can be equivalently renormed, so that every relatively weakly open subset of unit ball of $X$ has diameter two but it has convex combination of slices of arbitrarily small diameter.
Hence a Banach space containing an isomorphic copy of $c_0$ can be equivalently renormed so that it has $BSCSP$ but not $BHP.$
  
\item $ w^*BHP \nRightarrow  w^*BDP.$ \\ If we consider bidual of the  space in (i), then from Proposition $\ref{A1}$,  we get a space with $w^*BHP$  but not $w^* BDP$ .

\item $w^* BSCSP \nRightarrow w ^* BHP.$ \\ If we consider bidual of the  space in (ii), then from Proposition $\ref{A1}$, we get a space with $w^*BSCSP$  but not $w^* BHP.$

\item ${BDP \nRightarrow w^*BDP}$, $ {BHP \nRightarrow w^*BHP}$, $ {BSCSP \nRightarrow w^*BSCSP}$\\
Let $X=C[0,1].$ It is well known that $X^*=L_1[0,1] \oplus _1 Z$, for some subspace $Z$ of $X^*$ with $RNP$ and hence $Z$ has $BDP.$ Since  $Z$ has $BDP,$ it follows from Proposition \ref{BDP p sum}, $X^*$ has $BDP$ and hence it has $BHP$ and $BSCSP.$ But every convex combination of $w^*$ slices of $B_{X^*}$ has diameter two ( 
see  \cite{BGLPRZ3} for details). Hence, $C[0,1]^*$ does not have $w^*$-$BSCSP$ and consequently it does not have  $w^*$-$BDP$ , $w^*$-$BHP.$
 \el
\end{example}
\begin{remark}
In recent times,the study of diameter two properties has been a very active area of research in the geometry of Banach spaces( see \cite{BGLPRZ1},\cite{BGLPRZ2},\cite{BGLPRZ3},\cite{ALN},\cite{L1}). As is evident from the examples above that a Banach space may have one version of small diameter property and simultaneously may have a version of diameter two property. More investigations about the geometric implications of these observations will be an interesting topic of research in future. 	
\end{remark}

\section{Lebesgue Bochner Function Spaces}
Let $(\Omega,\mathcal{A},\mu)$ be a finite measure space. Then for $1\leqslant p<\infty,$  
$L^p(\mu,X)$ denote the Banach space of all Lebesgue-Bochner 
function of p-integrable $X$ valued functions defined on $\Omega$  with  norm $\|f\|_p=\Big(\int_{\Omega} \|f(t)\|^p d\mu (t)\Big)^{1/p}$ ( see \cite{DU} for details).
\begin{proposition} \label{bochner bhp}
	Let $(\Omega,\mathcal{A},\mu)$ be a finite measure space.Let $1\leqslant p<\infty.$ Then $L^p(\mu,X)$ has $BHP$ implies $X$ has $BHP.$ 
\end{proposition}
\begin{proof}
Suppose  $X$ does not have  $BHP.$ Then there exists $\varepsilon >0$ such that every relatively weakly open subset of $B_X$ has diameter $>\varepsilon.$ Since $L^p(\mu,X)$ has $BHP,$there exists  $V,$ a weakly open subset of $L^p(\mu,X)$ such that $V\bigcap B_{L^p(\mu,X)}\neq \emptyset$ and $dia(V\bigcap B_{L^p(\mu,X)})<\varepsilon.$ If $X$ is finite dimensional then it has $RNP$ and so it has $BHP.$ So we consider the case when $X$ is infinite dimensional .
	Then $L^p(\mu,X)$ is also infinite dimensional and so $V\bigcap S_{L^p(\mu,X)}\neq \emptyset.$ Choose $f_0\in V\bigcap S_{L^p(\mu,X)}.$ Since simple functions are dense in $L^p(\mu,X)$ so let $f_0=\sum_{i=1}^{n} x_i \chi_{A_i}$ where $A_i(i=1,2,\ldots,n)$ are disjoint sets in $\mathcal{A}$ and $x_i(\neq 0)\in X$  $\forall i=1,2,\ldots,n.$ Since weak topology on $L^p(\mu,X)$ is linear so $(x,y)\mapsto x+y$ from $L^p(\mu,X) \times L^p(\mu,X) \rightarrow L^p(\mu,X)$ is weak continuous.
	Thus there exist weakly open subsets $V_i$ containing $x_i\chi_{A_i}$ in $L^p(\mu,X)$ such that
	\begin{equation}\label{1equ}
		\sum_{i=1}^{n} V_i\subset V
	\end{equation}
	
	Since the map $x\mapsto x \chi_{A_i}$ from $X$ to $L^p(\mu,X)$ is linear and norm continuous so also weak continuous for each i. Thus for each i , there exists weakly open subset $W_i\subset X$ containing $x_i$ such that
	\begin{equation}\label{2equ}
		x \chi_{A_i} \in V_i \quad \forall  x\in W_i 
	\end{equation}
	
	Note that $\frac{W_i}{\|x_i\|}$ is a weakly open set in $X$ and $\frac{W_i}{\|x_i\|}\bigcap B_X\neq\emptyset$ as $\frac{x_i}{\|x_i\|}\in \frac{W_i}{\|x_i\|}\bigcap B_X.$ Then $diam(\frac{W_i}{\|x_i\|}\bigcap B_X)>\varepsilon$ implies $diam({W_i}\bigcap {\|x_i\|}B_X)>\varepsilon {\|x_i\|}.$ Then for each i , there exist $y_i,z_i\in {W_i}\bigcap {\|x_i\|}B_X$ such that $\|y_i-z_i\|>\varepsilon \|x_i\|.$
	 Consider $g=\sum_{i=1}^{n} y_i \chi_{A_i}$ and $h=\sum_{i=1}^{n} z_i \chi_{A_i}.$ 
	 From $\ref{1equ}$ and $\ref{2equ}$  we get $g,h\in V.$ Also $g,h\in B_{L^p(\mu,X)}.$ 
	 Indeed , $\|g\|_p^p=\sum_{i=1}^{n} \|y_i\|^p \mu({A_i})\leqslant \sum_{i=1}^{n} \|x_i\|^p \mu({A_i})=1.$ We argue simialry for $h.$ Finally, \\
	$dia (V\bigcap B_{L^p(\mu,X)})\geqslant \|g-h\|_p=\Big(\sum_{i=1}^{n} \|y_i-z_i\|^p \mu({A_i})\Big)^{1/p}>\Big(\sum_{i=1}^{n} \varepsilon^p \|x_i\|^p \mu({A_i})\Big)^{1/p}=\varepsilon$ , which is a contradiction.
\end{proof}

\begin{proposition} \label{bochner bdp}
	Let $(\Omega,\mathcal{A},\mu)$ be a finite measure space. Let $1\leqslant p<\infty.$ Then $L^p(\mu,X)$ has $BDP$ implies $X$ has $BDP.$
	  
\end{proposition}
\begin{proof}
	Suppose $X$ does not have  $BDP.$ 
	Then there exists $\varepsilon >0$ such that any slice of $B_X$ has diameter $>\varepsilon.$ Since $L^p(\mu,X)$ has $BDP$ so there exists a slice $S(B_{L^p(\mu,X)},f^*,\alpha)$ of $B_{L^p(\mu,X)}$ with diameter $<\varepsilon.$
	Choose $f\in S(B_{L^p(\mu,X)},f^*,\alpha)$ with $\|f\|_p=1.$ 
	Since simple functions are dense in $L^p(\mu,X)$ space , so without loss of generality we can assume that $f=\sum_{i=1}^{n} x_i \chi_{A_i}$ where $A_i(i=1,2,\ldots,n)$ are disjoint sets in $\mathcal{A}$ and $x_i(\neq 0) \in X$  $\forall i=1,2,\ldots,n.$ 
	For each $i=1,\ldots,n$ we define $x_i^*(x)=f^*(x\chi_{A_i}).$ Then clearly $x_i^*$ is linear and bounded.
	 Now for each i , consider the slice $S(\|x_i\|B_X, x_i^*, \|x_i^*\| \|x_i\| - x_i^*(x_i) +\alpha_i)$ 
	$ =\{y\in \|x_i\|B_X : x_i^*(y)>x_i^*(x_i)-\alpha_i\}$ where $\alpha_i>0$ are such that $\sum_{i=1}^{n} \alpha_i \leqslant f^*(f)-(1-\alpha).$ 
	Since any slice of $B_X$ has diameter $>\varepsilon$ so 
	$dia(S(\|x_i\|B_X, x_i^*, \|x_i^*\| \|x_i\| - x_i^*(x_i) +\alpha_i)) >\|x_i\| \varepsilon$ 
	Thus for each i , there exist $y_i,z_i\in S(\|x_i\|B_X, x_i^*, \|x_i^*\| \|x_i\| - x_i^*(x_i) +\alpha_i)$ such that $\|y_i-z_i\|>\|x_i\|\varepsilon.$ Define $g=\sum_{i=1}^{n} y_i \chi_{A_i}$ , $h=\sum_{i=1}^{n} z_i \chi_{A_i}.$ 
	$$\|g\|_p^p=\sum_{i=1}^{n} \|y_i\|^p \mu(A_i)\leqslant \sum_{i=1}^{n} \|x_i\|^p \mu(A_i)=1$$
	Similarly for h. Thus $g,h\in B_{L^p(\mu,X)}.$ Also \\
	$f^*(g)=f^*(\sum_{i=1}^{n} y_i \chi_{A_i})
	=\sum_{i=1}^{n} f^*(y_i \chi_{A_i})
	=\sum_{i=1}^{n} x_i^*(y_i)$
	$$>\sum_{i=1}^{n} (x_i^*(x_i)-\alpha_i)$$
	$$=\sum_{i=1}^{n} x_i^*(x_i)-\sum_{i=1}^{n} \alpha_i$$
	$$ = \sum_{i=1}^{n} f^*(x_i\chi_{A_i}) - \sum_{i=1}^{n} \alpha_i $$
	$$ = f^*(f)-\sum_{i=1}^{n} \alpha_i 
	\geqslant 1-\alpha$$
	Thus $g\in S(B_{L^p(\mu,X)},f^*,\alpha).$ Similarly $h \in S(B_{L^p(\mu,X)},f^*,\alpha).$\\
	Hence 
	$$ dia(S(B_{L^p(\mu,X)},f^*,\alpha))\geqslant \|g-h\|_p = \Big(\sum_{i=1}^{n} \|y_i-z_i\|^p \mu({A_i})\Big)^{1/p}>\Big(\sum_{i=1}^{n} \varepsilon^p \|x_i\|^p \mu({A_i})\Big)^{1/p}=\varepsilon$$  which is a contradiction.
\end{proof}

\begin{remark}
	We do not know whether the converse of the Propositions $\ref{bochner bhp}$, $\ref{bochner bdp}$ are true or not in general. However, converse of Proposition \ref{bochner bhp}, fails for $p=1$. Indeed, $\mathbb{R}$ has $BHP$ but $L^1(\mu ,\mathbb{R})$  where $\mu$ is Lebesgue measure on $[0,1]$ (which is simply denoted by $L^1[0,1]$) does not have $BHP.$ 
\end{remark}


\section{Three Space Property}
We quote the following result from \cite{GGMS}
 \begin {proposition}\label{sr}
        \cite{GGMS}  $X$ is strongly regular if and only if every closed convex bounded subset $D$ of $X$ is the norm closure of its $SCS$ points. 
        \end{proposition}
\begin{proposition}
	If $X/Y$ is strongly regular and $Y$ has $BSCSP,$ then $X$ has $BSCSP.$
\end{proposition}
\begin{proof}
	Suppose  $X$ does not have  $BSCSP.$
	 Then there exists $\varepsilon> 0$ such that diameter of any convex combination of slices of $B_X$ is greater than $\varepsilon.$
	  Since $Y$ has $BSCSP,$ for  $\epsilon > 0,$ there exists slices $S(B_Y,y_i^*,\delta)$ and $ 0 \leq \lambda_i\leq, \sum_{i-1}^{n}  \lambda_i =1$ such that  $dia(\sum_{i=1}^{n} \lambda_i S(B_Y,y_i^*,\delta))<\frac{\varepsilon}{2}.$ 
	  Choose $\tilde{\varepsilon},\delta_0> 0$ such that $\tilde{\varepsilon}+2 \delta_0< \delta,$ and $ 0<\delta_0<\frac{\varepsilon}{8}.$ By Hahn Banach theorem , we can extend $y_i^*$ to a norm preserving extension $x_i^*$ for all $i=1,2,\ldots,n.$
	 Let $P:X\rightarrow X/Y$ be the map such that $P(x)=x+Y.$ 
	 Now, $A_i=P(S(B_X,x_i^*,\tilde{\varepsilon}))$  is a convex subset of $B_{X/Y}$ , since $\|P\|\leqslant 1$ and $A_i$ also contains zero. 
	 Also, by  Proposition \ref{sr}, $\bar{A}_i$=$\ov{(SCS(\bar{A}_i))}$.
	   Thus $\forall i=1,2,\ldots,n,$  there exists $SCS$ point $a_i$  of $\bar{A}_i$  such that $\|a_i\|<\delta_0.$ 
	 Hence $a_i=\sum_{j=1}^{n_i} \gamma_j^i a_j^i \in \sum_{j=1}^{n_i}\gamma_j^i (S(B_{X/Y}, (a_j^i)^*,\eta_j^i)\cap\bar{A}_i$ with $dia(\sum_{j=1}^{n_i}\gamma_j^i (S(B_{X/Y}, (a_j^i)^*,\eta_j^i)\cap\bar{A}_i)<\delta_0$ for all $i=1,2,\ldots,n$ where $\sum_{j=1}^{n_i}\gamma_j^i =1$ with $\gamma_j^i >0$ $\forall j=1,\ldots n_i$, $(a_j^i)^* \in S_{{(X/Y)}^*}$ and $\eta_j^i >0$  for all $i=1,2,\ldots,n.$
	Now,  $S(B_{X/Y}, (a_j^i)^*,\eta_j^i)\cap\bar{A}_i \neq \emptyset$ implies $S(B_{X/Y}, (a_j^i)^*,\eta_j^i)\cap A_i \neq \emptyset.$
	Consequently, $S(B_{X}, P^*(a_j^i)^*,\eta_j^i)\cap S(B_X,x_i^*,\tilde{\varepsilon}) \neq \emptyset.$
	Indeed, let $P(z)\in S(B_{X/Y}, (a_j^i)^*,\eta_j^i)$ for some $z \in S(B_X,x_i^*,\tilde{\varepsilon}).$ hence 
$ P^*(a_j^i)^*(z)=(a_j^i)^*(P(z))>1 - \eta_j^i \geqslant  \sup_{w\in B_{X}} P^*(a_j^i)^* (w) - \eta_j^i.$ 	
	 Now, $ D =\sum_{i=1}^{n} \lambda_i \sum_{j=1}^{n_i}\gamma_j^i ((S(B_{X},P^*(a_j^i)^*,\eta_j^i)\cap S(B_X,x_i^*,\tilde{\varepsilon}))$ is a convex combination of nonempty relatively weakly open subset of $B_X.$
	 By Bourgain's lemma, $D$ contains a convex combination of slices of $B_X$ and since diameter of any convex combination of slices of $B_X$ is greater than $\varepsilon,$ so $dia \sum_{i=1}^{n} \lambda_i \sum_{j=1}^{n_i}\gamma_j^i ((S(B_{X},P^*(a_j^i)^*,\eta_j^i)\cap S(B_X,x_i^*,\tilde{\varepsilon}))>\varepsilon.$ 
	  Then there exists $x_j^i,z_j^i \in (S(B_{X},P^*(a_j^i)^*,\eta_j^i)\cap S(B_X,x_i^*,\tilde{\varepsilon})$ such that $\|\sum_{i=1}^{n} \lambda_i \sum_{j=1}^{n_i}\gamma_j^i x_j^i-\sum_{i=1}^{n} \lambda_i \sum_{j=1}^{n_i}\gamma_j^i   z_j^i \|>\varepsilon.$
	   Note that, $\sum_{j=1}^{n_i}\gamma_j^i x_j^i \in   \sum_{j=1}^{n_i}\gamma_j^i ((S(B_{X},P^*(a_j^i)^*,\eta_j^i)\cap S(B_X,x_i^*,\tilde{\varepsilon}))$
	$\Rightarrow P( \sum_{j=1}^{n_i}\gamma_j^i x_j^i) \in \sum_{j=1}^{n_i}\gamma_j^i (S(B_{X/Y}, (a_j^i)^*,\eta_j^i)\cap A_i ).$
	 Since $diam (\sum_{j=1}^{n_i}\gamma_j^i (S(B_{X/Y}, (a_j^i)^*,\eta_j^i)\cap A_i ) <\delta_0, \forall i=1,2,\ldots,n,$  we have ,\\
	 $\|P( \sum_{j=1}^{n_i}\gamma_j^i x_j^i) - \sum_{j=1}^{n_i}\gamma_j^i a_j^i\|<\delta_0.$
	$\Rightarrow \|P( \sum_{j=1}^{n_i}\gamma_j^i x_j^i)\|<\delta_0+\| \sum_{j=1}^{n_i}\gamma_j^i a_j^i\|$
	$<2\delta_0$ \quad $\forall i=1,2,\ldots,n$\\
	Thus, $d(\sum_{j=1}^{n_i}\gamma_j^i x_j^i,Y)<2\delta_0$
	$\Rightarrow$ for each $i=1,2,\ldots,n$ there exists $v_i\in B_Y$ such that $\|v_i - \sum_{j=1}^{n_i}\gamma_j^i x_j^i\|<2\delta_0.$
	Similarly , for each $i=1,2,\ldots,n$ there exists $w_i\in B_Y$ such that $\|w_i - \sum_{j=1}^{n_i}\gamma_j^i z_j^i\|<2\delta_0.$ 
	Now , $y_i^*(v_i)=y_i^*(\sum_{j=1}^{n_i}\gamma_j^i x_j^i)+y_i^*(v_i-\sum_{j=1}^{n_i}\gamma_j^i x_j^i)>1-\tilde{\varepsilon}-2\delta_0$\\
	Similarly , $ y_i^*(w_i)>1-\tilde{\varepsilon}-2\delta_0.$ Thus $v_i,w_i
	\in S(B_Y,y_i^*,\tilde{\varepsilon}+2\delta_0)$\\ 
	$dia(\sum_{i=1}^{n} \lambda_i  S(B_Y,y_i^*,\tilde{\varepsilon}+2\delta_0) \geqslant \|\sum_{i=1}^{n} \lambda_i v_i - \sum_{i=1}^{n} \lambda_i w_i\|$ 
	$$\geqslant \|\sum_{i=1}^{n} \lambda_i \sum_{j=1}^{n_i}\gamma_j^i x_j^i  - \sum_{i=1}^{n} \lambda_i \sum_{j=1}^{n_i}\gamma_j^i z_j^i  \|-\|\sum_{i=1}^{n} \lambda_i v_i-\sum_{i=1}^{n} \lambda_i \sum_{j=1}^{n_i}\gamma_j^i x_j^i  \|$$  $$-\|\sum_{i=1}^{n} \lambda_i w_i - \sum_{i=1}^{n} \lambda_i \sum_{j=1}^{n_i}\gamma_j^i z_j^i \|$$ 
	$$> \varepsilon - \sum_{i=1}^{n} \lambda_i \|v_i - \sum_{j=1}^{n_i}\gamma_j^i x_j^i   \|-\sum_{i=1}^{n} \lambda_i \|w_i- \sum_{j=1}^{n_i}\gamma_j^i z_j^i   \|$$
	$$>\varepsilon -2\delta_0-2\delta_0$$
	$$=\varepsilon-4\delta_0$$ 
	Since , $\sum_{i=1}^{n} \lambda_i  S(B_Y,y_i^*,\tilde{\varepsilon}+2\delta_0) \subset \sum_{i=1}^{n} \lambda_i  S(B_Y,y_i^*,\delta)$ 
	so $$dia( \sum_{i=1}^{n} \lambda_i  S(B_Y,y_i^*,\delta) \geqslant dia(\sum_{i=1}^{n} \lambda_i  S(B_Y,y_i^*,\tilde{\varepsilon}+2\delta_0))>\varepsilon-4\delta_0 >\varepsilon-\frac{\varepsilon}{2}=\frac{\varepsilon}{2},$$
	a contradiction.
\end{proof}

\begin{proposition}
	Let $Y$ be a closed subspace of $X$ such that $X/Y$ is finite dimensional and $Y$ has $BHP$, then $X$ has $BHP.$
\end{proposition}
\begin{proof}
	Suppose $X$ does not have  $BHP.$ Then there exists $\varepsilon>0$ such that diameter of any relatively weakly open set in $B_X$ is $>\varepsilon.$ 
	Since $Y$ has $BHP$ , so let $W=\{y\in B_Y:|y_i^*(y-y_0)|<\varepsilon_i \quad \forall i=1,2,\ldots,n\}$ where $y_0\in B_Y$ is relatively weakly open set in $B_y$ with diameter $<\frac{\varepsilon}{2}.$ 
	Now for each $i=1,2,\ldots,n$, we can find 
	$\tilde{\varepsilon_i}>0$
	and $\frac{\varepsilon}{4}>\delta_0>0$ such that 
	$\tilde{\varepsilon_i} + \delta_0\|y_i^*\|<\varepsilon_i.$ 
	By Hahn Banach theorem , we can extend $y_i^*$ to a norm preserving extension $x_i^*$ for all $i=1,2,\ldots,n.$ Define \\
	$U=\{x\in B_X: |x_i^*(x-y_0)<\tilde{\varepsilon_i} \quad \forall i=1,2,\ldots,n\}.$
	Clearly $U\neq\emptyset$ and $U$ is relatively weakly open in $B_X.$
	 Let $P:X\rightarrow X/Y$ be the map such that $P(x)=x+Y.$ Then clearly $P$ is onto and linear. 
	Also $P$ is open map by Open Mapping Theorem.
	 Thus $P(U)$ is a norm open set in $X/Y$ and $y_0\in U\cap Y.$ Thus $P(U)$ is a norm open set containing zero. 
	So, there exists $0<\delta<\frac{\delta_0}{2}$ such that $B(0,\delta)\subset P(U).$ 
	Put, $B=P^{-1}(B(0,\delta))\bigcap U \subset B_X.$ 
	Now using the fact that norm norm continuous implies weak weak continuous and $X/Y$ is finite dimensional we can conclude that $B$ is relatively  weakly open in $B_X.$ 
	Then $dia(B)>\varepsilon.$
	 Thus there exists $v_1,v_2\in B$ such that $\|v_1-v_2\|>\varepsilon.$ 
	Now, $\|P(v_1)\| <\delta 
	\Rightarrow d(v_1,Y)<\delta
	\Rightarrow \exists u_1\in Y \ such\ that \|u_1-v_1\|<\delta.$
	 Similarly for $v_2$ there exists $u_2\in Y$ such that $\|u_2-v_2\|<\delta.$ 
	 Without loss of generality we can assume that $u_1,u_2\in B_Y.$ Otherwise we will choose $\frac{u_1}{\|u_1\|}$ and $\frac{u_2}{\|u_2\|}.$ 
	 Now $\forall i=1,2,\ldots,n$ ,
	$|y_i^*(u_1-y_0)|\leqslant |y_i^*(u_1-v_1)|+|y_i^*(v_1-y_0)|\leqslant \|y_i^*\| 2 \delta + \tilde{\varepsilon_i} <\| y_i^*\| \delta_0 + \tilde{\varepsilon_i}<\varepsilon_i$ \\
	Thus, $u_1\in W.$ 
	Similarly $u_2\in W$ and \\$\|u_1-u_2\|\geqslant \|v_1-v_2\|+\|u_1-v_1\|+\|u_2-v_2\|$  $>\varepsilon-4\delta>
	\varepsilon-2\delta_0>\varepsilon-\frac{\varepsilon}{2}=\frac{\varepsilon}{2}.$ \\ 
	Thus , $dia(W)>\frac{\varepsilon}{2}$ , a contradiction.
\end{proof}


\begin{Acknowledgement}
The second  author's research is funded by the National Board for Higher Mathematics (NBHM), Department of Atomic Energy (DAE), Government of India, Ref No: 0203/11/2019-R$\&$D-II/9249.
\end{Acknowledgement}


\begin{thebibliography}{99}
\small

\bibitem[ALN]{ALN} T.\ Abrahamsen, V.\ Lima, O.\ Nygaard; {\it Remarks on diameter 2 properties}, J.Conv. Anal. {\bf 20} (1) 439-452 (2013).
\bibitem[B1]{B1} R. \ D. \ Bourgin; {\it Geometric aspects of convex sets with the Radon-Nikodym property}, Lecture Notes in Mathematics Springer-Verlag  Berlin {\bf 993} (1983).
 \bibitem[B2]{B2} S.\ Basu; {\it On Ball dentable property  in Banach Spaces},  Math.\ Analysis and its Applications in Modeling (ICMAAM 2018) Springer Proceedings in Mathematics and Statistics {\bf 302}, 145-149 (2020). 
 \bibitem[B3]{B3} J. \ Bourgain; {\it La propriété de Radon-Nikodym}, Publ. Mathé. de l'Université Pierre et Marie Curie {\bf 36} (1979).
\bibitem[BGLPRZ1]{BGLPRZ1} J.\ Becerra Guerrero, G.\ L$\acute{o}$pez P$\acute{e}$rez, A.\ Rueda Zoca; {\it Big Slices versus big relatively weakly open subsets in Banach spaces},  J.Math. Anal. Appl. {\bf 428} (2) 855-865 (2015).
\bibitem[BGLPRZ2]{BGLPRZ2} J.\ Becerra Guerrero, G.\ L$\acute{o}$pez P$\acute{e}$rez, A.\ Rueda Zoca; {\it Extreme differences between weakly open subsets and convex combination of slices in Banach spaces},  Adv. Math. {\bf 269}  56-70  (2015).
\bibitem[BGLPRZ3]{BGLPRZ3} J.\ Becerra Guerrero, G.\ L$\acute{o}$pez P$\acute{e}$rez, A.\ Rueda Zoca; {\it Octahedral norms and convex combination of slices in Banach spaces}, J.Funct.Anal. {\bf 266} (4) 2424-2435 (2014).
\bibitem[BR]{BR} S.\ Basu, T.\ S.\ S.\ R.\ K.\ Rao; { \it On Small Combination of slices in Banach Spaces}, Extracta Mathematica  {\bf 31} 1-10 (2016).
\bibitem[DU]{DU} J .\ Diestel, J. \ J. \ Uhl; {\it Vector Measures}, Amer. Math. Soc. {\bf 15} (1977).
\bibitem [EW]{EW} G \.A  \ Edgar, R. \  Wheeler; { \it Topological properties of Banach spaces}, Pacific J. Math., {\bf 115} 317-350 (1984).
\bibitem [GGMS]{GGMS} N.\ Ghoussoub , G.\ Godefroy , B.\ Maurey, W.\ Scachermayer; { \it Some topological and geometrical structures in Banach spcaes}, Mem. Amer. Math. Soc. {\bf 70}  378  (1987).
\bibitem [GM]{GM} N. \ Ghoussoub, B. \ Maurey;  {\it $G_\delta$ Embeddings in Hilbert Space}, J. \ Funct.\ Anal. {\bf 61} (1) 72-97 (1985). 
\bibitem [GMS]{GMS} N. \ Ghoussoub, B. \ Maurey, W. \  Schachermayer; {\it Geometrical implications of
certain infinite-dimensional decomposition}, Trans. Amer. Math. Soc.  {\bf 317}, 541–584 (1990).
\bibitem [HWW]{HWW} P.\ Harmand, D.\ Werner and W.\ Werner; {\it $M$-ideals in Banach spacesand Banach algebras}, Lecture Notes in Mathematics,  Springer-Verlag, Berlin. {\bf 1547} (1993) . 
\bibitem[L1]{L1} J.\ Langemets; {\it Geometrical structure in diameter 2 Banach spaces}, Dissertationes Mathematicae Universitatis Tartuensis  {\bf 99} (2015).
\bibitem[R]{R} H. \ P.\ Rosenthal; {\it On the structure of non-dentable closed bounded convex sets}, Adv. in Math.  { \bf 70} 1-58 (1988) .
\bibitem[S]{S} W.\ Schachermayer;  {\it The Radon Nikodym Property and the Krein-Milman Property are equivalent for strongly regular sets}, Trans. Amer. Math. Soc. {\bf 303} (2) 673-687 (1987)   .





\end{thebibliography}
\end{document}